\def\maxwidth{\ifdim\Gin@nat@width>\linewidth\linewidth\else\Gin@nat@width\fi}
\def\maxheight{\ifdim\Gin@nat@height>\textheight\textheight\else\Gin@nat@height\fi}
\def\fps@figure{htbp}
\providecommand{\tightlist}{%
  \setlength{\itemsep}{0pt}\setlength{\parskip}{0pt}}
\title{Optimal trading: a model predictive control approach}
\author{Simon Clinet\footnote{Kepler Cheuvreux, Algorithmic Trading
  Quant, PhD,
  \href{mailto:sclinet@keplercheuvreux.com}{\nolinkurl{sclinet@keplercheuvreux.com}},
  +33 1 70 81 57 63} \and Jean-François Perreton\footnote{Kepler
  Cheuvreux, Head of Algorithmic Trading Quant,
  \href{mailto:jperreton@keplercheuvreux.com}{\nolinkurl{jperreton@keplercheuvreux.com}},
  +41 22 994 1306} \and Serge Reydellet\footnote{Kepler Cheuvreux,
  Senior Algorithmic Trading Quant, CFA,
  \href{mailto:sreydellet@keplercheuvreux.com}{\nolinkurl{sreydellet@keplercheuvreux.com}},
  +41 22 994 1334}}
\date{November 03, 2021}
\begin{document}
\maketitle
\begin{abstract}
We develop a dynamic trading strategy in the Linear Quadratic Regulator
(LQR) framework. By including a price mean-reversion signal into the
optimization program, in a trading environment where market impact is
linear and stage costs are quadratic, we obtain an optimal trading curve
that reacts opportunistically to price changes while retaining its
ability to satisfy smooth or hard completion constraints. The optimal
allocation is affine in the spot price and in the number of outstanding
shares at any time, and it can be fully derived iteratively. It is also
aggressive in the money, meaning that it accelerates whenever the price
is favorable, with an intensity that can be calibrated by the
practitioner. Since the LQR may yield locally negative participation
rates (i.e round trip trades) which are often undesirable, we show that
the aforementioned optimization problem can be improved and solved under
positivity constraints following a Model Predictive Control (MPC)
approach. In particular, it is smoother and more consistent with the
completion constraint than putting a hard floor on the participation
rate. We finally examine how the LQR can be simplified in the continuous
trading context, which allows us to derive a closed formula for the
trading curve under further assumptions, and we document a two-step
strategy for the case where trades can also occur in an additional dark
pool.

\par

\textbf{Keywords:} Algorithmic Trading; Optimal Execution; Linear
Quadratic Regulator; Model Predictive Control; Mean Reversion Signal;
Quadratic Programming
\end{abstract}

\section{Introduction}

Since the seminal papers of \cite{bertsimas1998optimal} and
\cite{almgren2001optimal}, algorithmic execution has been at the core of
an intense research, and has led to significant improvements along the
way (see e.g \cite{huberman2005optimal}, \cite{gatheral2011optimal},
\cite{obizhaeva2013optimal}, \cite{cartea2015algorithmic},
\cite{gueant2016financial}, and \cite{alfonsi2016dynamic}). A large part
of these contributions has focused on the so-called Implementation
Shortfall (IS) algorithm, whose goal is the minimization under risk
constraints of the expected slippage between the averaged execution
price and the arrival price (i.e the quoted price when the order
starts).

It has been widely acknowledged in the aforementioned works and, for
instance, in \cite{hora2006tactical} and \cite{shen2017hybrid} that
execution strategies should be able to react dynamically to changes in
the trading environment. For instance, it is often required that the
strategy be \textit{aggressive in the money}, meaning that, when buying,
the trading speed should tend to increase whenever the price is low and
decrease when it is high. The rationale behind this behavior is that the
price may feature locally weak to moderate mean-reversion patterns. Such
a behavior may also be motivated by some private information that the
broker has at hand, or by a client's request. For instance,
\cite{gatheral2011optimal} show that their algorithm is aggressive in
the money, and \cite{shen2017hybrid} constructs an algorithm whose
participation rate follows the price curve via squared price slippage
penalization. This is in contrast with the perhaps more standard CARA
framework (see e.g \cite{almgren2001optimal}, \cite{gueant2014vwap},
\cite{gueant2016financial}), which, under an exponential utility
criterion, always yields a deterministic optimal trading curve as shown
in \cite{schied2010optimal}.

In this work, our primary goal is the derivation of a new execution
strategy which benefits from potential price excursions, in a tractable
way. To do so, we set aside the general dynamic programming approach.
Although appealing for the generality of models it can deal with, most
of the time (i) it does not yield closed formulas and (ii) it suffers
from the curse of dimensionality, especially in such complex state
spaces as the ones encountered in algorithmic trading. Instead, we
extend the classical model of \cite{almgren2001optimal} by incorporating
a mean-reversion signal in the price dynamics and show that this new
optimization problem falls under the scope of the celebrated
\textit{linear quadratic regulator} (LQR). The LQR framework
(\cite{bertsekas1995dynamic}) holds whenever the model dynamics can be
described by a linear state transition and is subject to quadratic
costs. It has been applied to several execution problems
(\cite{shen2017hybrid} for the IS case, \cite{busseti2015volume} for the
VWAP case). We take the state process as the couple \((p_t,q_t)\) where
\(p_t = (S_t - \mathfrak S_t )/S_0\) is the price slippage with respect
to some benchmark \(\mathfrak S_t\) and \(q_t \in [0,1]\) is the
proportion of outstanding shares at time \(t\). When
\(\mathfrak S_t \equiv S_0\), we recover an IS strategy, but other
targets can be considered without loss of generality.

One of the main advantages of the LQR compared to more general
stochastic control frameworks is that the optimal participation control
\(u_t^*\) can be explicitly calculated as an affine function of the
state process. Therefore, dynamic features such as aggressiveness in the
money are easily read in the shape of the optimal solution. Our control
problem is quite similar to \cite{shen2017hybrid} who also follows an
LQR approach, although the author explicitly adds a quadratic cost term
in \(\sum_{t}{p_t^2}\) in order to induce reaction to favorable price
movements, whereas our own work directly inputs a mean-reversion
coefficient in the price process used in the optimization procedure. By
doing so, we aim at achieving a more transparent behavior, in line with
the idea that aggressiveness in the money stems from local
mean-reverting excursions. We detail in the next section other
differences between our own model and that of \cite{shen2017hybrid}, in
particular in the way market impact is defined.

As a consequence of the LQR framework, we obtain an optimal strategy
which presents the following advantages: (i) it is affine in the price
process and in the remaining quantity to execute, with coefficients that
can be computed \emph{before the beginning of the execution} following
an iterative procedure; (ii) it is aggressive in the money, with an
intensity that is directly linked to the fictive mean-reversion
parameter; (iii) it is also possible to incorporate into the model other
common features such as an \textit{urgency parameter} and a
\textit{trend signal} without leaving the LQR framework.

An important weakness of the LQR model is the absence of guarantee that
the optimal participation rate will remain positive during the trading
period. However, in most cases, it is highly desirable to avoid
round-trip trading where the algorithm successively buys and sells.
Accordingly, our second goal consists in adapting the above LQR method
in order to circumvent this issue and enforce constraints on the
participation rate. Although one may simply set the participation rate
to zero whenever a negative value is predicted, it is clear that this in
turn may badly affect the trading process. We adopt another strategy and
consider the \textit{certainty equivalent} (CE) model, which amounts to
setting price volatility (the only source of randomness) to zero. Next,
we apply a \textit{model predictive control} (MPC) algorithm
(\cite{garcia1989model}), i.e we see the global costs function as a
quadratic form in the individual participation rates
\((u_t)_{t = 0, \delta t,...,T - \delta t}\) and minimize it for the CE
model, under the constraints \(u_t \geq 0\) for all
\(t = 0, \delta t,...,T - \delta t\). Accordingly, we transpose the
problem to a quadratic optimization with linear constraints, whose
solution can then be computed with standard (and fast) procedures. Any
linear constraint on the participation rates (such as participation
caps) can be harmlessly included in this framework. In theory, running
an MPC model yields an approximation of the optimal participation rate
only, although it is equivalent to the classical LQR in the absence of
constraints (hence it is optimal in that case). Overall, it is fast and
flexible, with very satisfying results from a numerical point of view.

For the sake of completeness, we document the shape of the optimal
strategy in several cases. We first show that the iterative method can
be substantially simplified in the continuous trading limit, that is,
when the rebalancing of the algorithm's inventory can be done
arbitrarily often. In turn, we derive a closed formula for the optimal
strategy when the permanent component of the market impact is
negligible. We finally study the case where orders can be sent to an
additional dark pool. In a model similar to that of
\citep{cartea2015algorithmic}, Section 7.4, we bring forward a two-step
strategy which concurrently sends orders to both venues.

The remainder of the paper is structured as follows. The model in
discrete time is introduced in Section 2. the LQR framework along with
the main results of the paper (Theorem \ref{thmLQR} and Theorem
\ref{thmLQR2}) are introduced in Section 3. Section 4 documents the
constrained optimization through the MPC method (Theorem \ref{thmMpc}
and Proposition \ref{propMPC}). We briefly discuss the calibration of
the mean-reversion signal in Section 5. Section 6 shows how the LQR can
be simplified in the continuous trading limit. Section 7 documents how
the MPC method can be extended to a two-step strategy in the presence of
a dark pool. We conclude in Section 8. Proofs are relegated to the
appendix.

\section{Model}

\subsection*{Quadratic costs}

We introduce our model in discrete time. Let us consider the trading
period \(t=0,\delta t,...,n\delta t = T\), where \(n\) is the number of
trading buckets of the form \([t, t+ \delta t)\) and of length
\(\delta t\). We will denote by \(x_t \delta t\) the quantity (in
shares) which is executed by the algorithm over \([t,t+\delta t)\),
while \(v_t \delta t\) will correspond to the associated expected market
volume over the same period. Note that the actual market volume is, in
general, not known to the practitioner at the beginning of the trading
bucket so that here \(v_t \delta t\) should be understood as an
\textit{estimated} value of the underlying market volume, based on
historical data or any other relevant method. The expected participation
rate is then naturally defined as \(u_t = x_t/v_t\). The
\textit{trading curve} \((u_t)_{t = 0, \delta t, ..., T-\delta t}\) will
be our control process for the remainder of the paper, and we look for
an optimal curve which minimizes a certain cost function in what
follows. It will finally be convenient to introduce\footnote{Note that
  in the expression of \(Q_t\), the sum should be implicitly understood
  as a sum over \(\tau = 0, \delta t, 2\delta t,..., T- \delta t\),
  convention that we adopt for the rest of the paper.}

\begin{itemize}
\item $Q_t = Q_0 - \sum_{\tau = 0}^{t-\delta t} x_\tau \delta t$, the quantity to be executed over the time window $[t,T)$. It is always taken non-negative, whatever the nature of the order (buying/selling). 
\item $q_t = Q_t/Q_0 \in [0,1]$, the proportion of non-executed shares at time $t$.
\end{itemize}

Hereafter, and without loss of generality, we assume that we are trading
on a single stock whose mid-price at time \(t\) is \(S_t\), to which we
associate the \textit{price slippage}
\[p_t = \mathfrak{s} \frac{S_t - \mathfrak S_t}{S_0}\] where
\(\mathfrak{s}\) is the side variable that is \(1\) for a buying order
and \(-1\) for a selling order, and where \(\mathfrak S_t\) is a price
benchmark that the algorithm is supposed to follow as closely as
possible. We retrieve the price slippage for a standard IS algorithm by
setting \(\mathfrak S_t \equiv S_0\), i.e by taking the arrival price as
a reference. In all generality, \(\mathfrak S_t\) may follow other
signals, such as the VWAP, the TWAP, or any other target that the
practitioner finds relevant. For each traded share, the
\textit{execution slippage} is \begin{align} 
\tilde{p}_t = p_t + s_t + \eta_t u_t
\end{align} where \(s_t\) is the signed half-spread in proportion of the
arrival price \(S_0\), and the deterministic quantity \(\eta_t\)
accounts for the temporary impact of trading over the period
\([t, t+\delta t)\). We obtain the corresponding execution slippage
induced by \(x_t\), and in proportion to \(Q_0\) \begin{align}
\tilde{p}_t \frac{x_t}{Q_0} = a_t (p_t+s_t) u_t + a_t \eta_t u_t^2   
\end{align} which is quadratic in \(u_t\) and affine in \(p_t\), where
we have conveniently introduced the scaling quantity \(a_t = v_t/Q_0\)
in a similar fashion as in \cite{shen2017hybrid}.

The primary goal of an IS algorithm is the minimization of the mean
execution slippage, however we also require in general that the strategy
be able to deal with the notions of urgency and completion. This can be
achieved by adding several penalty terms to the global cost function.
Let us introduce the two-dimensional state variable
\(X_t = (p_t,q_t)^T\). In this work, we consider the stage cost at time
\(t < T\) of the form \begin{align}  \label{eqStageCost}
j_t(X_t,u_t)\delta t = \{a_t\eta_t u_t^2 + a_t(p_t+s_t)u_t  + \beta_t q_t^2 \} \delta t,
\end{align}

and the terminal cost at time \(T\) \begin{align}
J_T(X_T) = \tilde{\beta}_T q_T^2.
\end{align}

The global cost function, starting from \(t\) is therefore

\begin{align} \label{eqGlobalCosts}
C_t((X_s)_{s = t,...,T-\delta t}) = \sum_{\tau=t}^{T-\delta t} j_\tau(X_\tau,u_\tau)\delta t + J_T(X_T).
\end{align}

We easily identify the first two terms of the stage cost as the
execution slippage over the bucket \([t, t+\delta t)\). The third term
\(\beta_t q_t^2\) is new, and is an inventory cost which penalizes
strategies holding too many unexecuted shares at time \(t\). The bigger
\(\beta_t\) the stronger the effect, so that \(\beta_t\) acts as an
urgency parameter, that forces the algorithm to select front-loaded
strategies. When \(\beta_t\) is taken proportional to the squared price
volatility, it corresponds exactly to the term coming from the variance
penalty in \cite{almgren2001optimal}. As for the terminal cost, we once
again penalize at the end of the execution any remaining share in the
inventory. This is a smooth way to enforce partial completion, and can
be made hard by taking \(\tilde{\beta}_T = +\infty\), in which case we
are reduced to the Almgren and Chriss framework which assumes the
terminal condition \(q_T = 0\).

Despite its enticing form, the above cost model implicitly assumes that
the spread cost term is proportional to \(s_t u_t\), which is reasonable
only if \(u_t\) is taken non-negative all along the trading curve. A
more realistic formulation would consider instead the cost
\(s_t |u_t|\), which is paid no matter whether the algorithm is buying
or selling at time \(t\). This, in turn, would break the LQR structure
of the problem, which makes this feature undesirable. The problem of
positivity of the participation rate is discussed in Section
\ref{SectionMPC}. For now, we restrict ourselves to the cost model
(\ref{eqGlobalCosts}), and keep in mind that only non-negative trading
curves make sense in this framework.

\subsection*{Price slippage dynamics}

When the price slippage is the sum of an arithmetic Brownian motion and
a linear permanent price impact term, that is a discrete price slippage
of the form
\[ p_{t+\delta t}  = p_t + \mu_t x_t \delta t + \sigma_{t,\delta t} z_t \]
where \(z_t\) are i.i.d standard normal variables, it is well-known that
minimizing (\ref{eqGlobalCosts}) yields a deterministic optimal strategy
(Section 6.5, \cite{cartea2015algorithmic}). Yet, as discussed in the
introduction, we look for trading curves which are aggressive in the
money. \cite{shen2017hybrid} advocates for the addition of another
penalty term in \(a_t\gamma_t p_t^2\) in the stage cost
(\ref{eqStageCost}), and shows that (i) the optimization problem can be
solved by the LQR method and (ii) it induces the desired behavior. In
this work, we take a slightly different route and directly input in the
optimization procedure a modified price dynamics, while leaving the
global cost (\ref{eqGlobalCosts}) as it is. Next, we show that this
approach also falls under the scope of the LQR method and reacts
appropriately to price slippage movements. Specifically, we modify the
above price slippage so that it incorporates some mean-reversion effect
as follows. We take

\[p_{t+\delta t} = (1 - \kappa_t \delta t)p_t + \mu_t x_t \delta t+ \sigma_{t, \delta t} z_t, \]
where, as before, \(\sigma_{t,\delta t}\) is the price slippage
volatility over a bucket, and \(\mu_t\) is the deterministic linear
permanent impact parameter. Note that, in contrast with
\cite{shen2017hybrid}, we have taken the permanent impact proportional
to \(x_t\) and not \(u_t\), because the former is not subject to price
manipulations (\cite{huberman2004price}) whereas the latter is. This is
important for the stability of the strategy as the existence of price
manipulations make strategies containing round-trip trades more
probable, although highly undesirable. The mean-reversion parameter
\(\kappa_t\) will play a crucial role in inducing a price-reaction
behavior in the trading curve. Possible meaning and calibration for
\(\kappa_t\) in practice are discussed in Section \ref{SectionKappa}.

\section{Unconstrained LQR} \label{sectionUnconstrainedLQR}

\subsection*{LQR with mean-reversion signal}

Recall that the state process \(X_t\) is defined as the couple
\((p_t,q_t)^T\). Easy calculations show that \(X_t\) admits a linear
state transition function which, in matrix form, is

\begin{align} \label{eqState}
X_{t+\delta t} = \left( \begin{matrix}  1- \kappa_t \delta t &0  \\ 0 &1 \end{matrix} \right)  X_t + a_t u_t w_t \delta t +  \left( \begin{matrix}  \sigma_{t, \delta t} z_t \\ 0 \end{matrix} \right),
\end{align} where we recall that \(a_t = v_t/Q_0\) and with
\(w_t= \left( Q_0 \mu_t, - 1 \right)^T\). Since the state transition
(\ref{eqState}) is linear in \(X_t\), and the stage and terminal costs
are quadratic in \(X_t\) and \(u_t\), we are under the LQR
specification\footnote{It is actually slightly more general because of
  the presence of the cross term \(a_t p_t u_t\) in the stage cost,
  although the optimization procedure can be performed following the
  same line of reasoning.}. The value function of our optimization
problem is, starting from time \(t\) and state \(X = (p,q)\)

\[ H(t,X) = \inf_{(u_k)_{k = t,...,T-\delta t}} \mathbb{E} \left[\left.\sum_{\tau=t}^{T-\delta t} j_\tau(X_\tau,u_\tau)\delta t + J_T(X_T) \right| X_t=X \right].\]
Key to our analysis is the fact that \(H\) satisfies the Bellman
equation \begin{align} \label{eqBellmanDiscrete}
H(t, X) = \inf_{u \in \mathbb R} \left\{j_t(X, u) \delta t + \mathbb E \left[ H\left(t+ \delta t, \left( \begin{matrix}  1- \kappa_t \delta t &0  \\ 0 &1 \end{matrix} \right) X +  u a_t w_t \delta t+  \left( \begin{matrix} \sigma_{t, \delta t} z_t  \\ 0 \end{matrix} \right) \right)\right] \right \}.
\end{align} Based on (\ref{eqBellmanDiscrete}), we are now ready to
state the main result of this section.

\newtheorem{theorem}{Theorem}
\begin{theorem} \label{thmLQR} (Unconstrained LQR)
Let $X = (p,q)^T$, $\mathcal I = \text{diag}(1,1)$, $\mathcal{J} = \text{diag}(1,0)$, and $e_1 = (1,0)^T$. The value function $H$ is quadratic in $X$, of the form $H(t,X) = X^T P_t X + b_t^T X + e_t$. The optimal control $u_t^*$ is of the form 
\begin{align}  \label{eqU}
u_t^* = k_t^T X_t + f_t 
\end{align}
with 
$$k_t = -\frac{1}{g_t}\left((\mathcal I - \kappa_t \delta t \mathcal J)P_{t+\delta t}^Tw_t + \frac{1}{2}e_1\right),$$
$$ f_t = -\frac{b_{t+\delta t}^T w_t + s_t}{2g_t},$$
$$ g_t = \eta_t + a_t w_t^T P_{t+\delta t} w_t \delta t .$$
Moreover, we have the backward iteration scheme 
$$ P_t =  P_{t+\delta t} - \kappa_t \delta t (\mathcal J P_{t+\delta t} + P_{t+\delta t} \mathcal J) + \kappa_t^2 (\delta t)^2 \mathcal J P_{t+\delta t} \mathcal J + \left[\Delta_t  - a_tg_tk_t k_t^T\right]\delta t,$$
$$ b_t = b_{t+\delta t} -\left\{2a_tg_tf_tk_t + \kappa_t \mathcal Jb_{t+\delta t}   \right\}\delta t,$$
$$e_t = e_{t+\delta t} + \sigma_{t,\delta t}^2 e_1^T P_{t+\delta t}e_1     - a_t g_t f_t^2  \delta t $$
where $\Delta_t = \left( \begin{matrix}  0 &0  \\ 0 & \beta_t \end{matrix} \right)$, and with the terminal conditions $ P_T = \left( \begin{matrix} 0 &0  \\ 0 & \tilde{\beta}_T \end{matrix} \right)$, $b_T = (0,0)^T$, $e_T=0$. 
\end{theorem}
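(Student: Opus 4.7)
The natural approach is a backward induction on the time index $t \in \{T, T-\delta t, \ldots, 0\}$ applied to the Bellman equation (\ref{eqBellmanDiscrete}). The induction hypothesis is that $H(t+\delta t, X) = X^T P_{t+\delta t} X + b_{t+\delta t}^T X + e_{t+\delta t}$ with $P_{t+\delta t}$ symmetric. The base case $t = T$ is immediate since $H(T, X) = J_T(X) = \tilde{\beta}_T q^2 = X^T P_T X$ with $P_T = \mathrm{diag}(0, \tilde{\beta}_T)$, $b_T = 0$, $e_T = 0$; symmetry is preserved along the induction, as will be visible from the update formula.

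For the induction step, I would write $A_t := \mathcal I - \kappa_t \delta t \mathcal J$ so that the linear dynamics (\ref{eqState}) read $X_{t+\delta t} = A_t X + u a_t w_t \delta t + \sigma_{t,\delta t} z_t e_1$, and substitute the induction hypothesis into the right-hand side of (\ref{eqBellmanDiscrete}). Taking expectations eliminates the linear noise term ($\mathbb E[z_t] = 0$) and contributes the constant $\sigma_{t,\delta t}^2 e_1^T P_{t+\delta t} e_1$ (from $\mathbb E[z_t^2] = 1$). Using $p = e_1^T X$ and $\beta_t q^2 \delta t = X^T \Delta_t X \delta t$, the resulting expression inside the infimum decomposes as
\[
F(X,u) = a_t g_t \delta t\, u^2 + a_t \delta t\, \bigl[X^T(2 A_t P_{t+\delta t} w_t + e_1) + s_t + b_{t+\delta t}^T w_t\bigr] u + \Phi(X),
\]
where $\Phi(X) := X^T A_t P_{t+\delta t} A_t X + X^T \Delta_t X \delta t + b_{t+\delta t}^T A_t X + \sigma_{t,\delta t}^2 e_1^T P_{t+\delta t} e_1 + e_{t+\delta t}$, invoking the symmetry of $P_{t+\delta t}$ and diagonality (hence symmetry) of $A_t$ to merge the cross terms. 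The leading coefficient in $u$ is exactly $a_t g_t \delta t$ with $g_t = \eta_t + a_t \delta t\, w_t^T P_{t+\delta t} w_t$, which one assumes positive so that the quadratic admits a unique minimiser. Setting $\partial_u F = 0$ and solving yields $u_t^* = k_t^T X + f_t$ with $k_t, f_t$ exactly as stated.

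It then remains to substitute $u_t^*$ back into $F$ and collect terms according to their $X$-degree. Since $F(X,u) = a_t g_t \delta t\,(u - u_t^*)^2 + F(X, u_t^*)$, one can write $F(X, u_t^*) = \Phi(X) - a_t g_t \delta t\,(k_t^T X + f_t)^2$ and expand the square into $X^T k_t k_t^T X$, $2 f_t k_t^T X$, and $f_t^2$. Matching coefficients with the ansatz $H(t,X) = X^T P_t X + b_t^T X + e_t$ gives
\[
P_t = A_t P_{t+\delta t} A_t + \Delta_t \delta t - a_t g_t k_t k_t^T \delta t, \quad
b_t = A_t b_{t+\delta t} - 2 a_t g_t f_t k_t \delta t,
\]
and $e_t = e_{t+\delta t} + \sigma_{t,\delta t}^2 e_1^T P_{t+\delta t} e_1 - a_t g_t f_t^2 \delta t$. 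Expanding $A_t P_{t+\delta t} A_t = (\mathcal I - \kappa_t \delta t \mathcal J) P_{t+\delta t} (\mathcal I - \kappa_t \delta t \mathcal J)$ and $A_t b_{t+\delta t} = b_{t+\delta t} - \kappa_t \delta t\, \mathcal J b_{t+\delta t}$ recovers the backward schemes announced in the theorem; symmetry of $P_t$ follows, closing the induction.

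The only mild obstacle is the bookkeeping in the matrix expansion: one must be careful that $(A_t X)^T P_{t+\delta t} w_t = X^T A_t P_{t+\delta t} w_t$ (using $A_t^T = A_t$ and $P_{t+\delta t}^T = P_{t+\delta t}$), and that the cross term $a_t p u \delta t$ in the stage cost merges cleanly with the $2 u a_t \delta t (A_t X)^T P_{t+\delta t} w_t$ contribution to form the $\tfrac{1}{2} e_1$ summand in $k_t$. Everything else is routine quadratic algebra.
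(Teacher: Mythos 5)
Your proof is correct and follows essentially the same route as the paper: backward induction on the Bellman equation, substitution of the quadratic ansatz, minimization of the resulting quadratic in $u$, and re-substitution to read off the updates for $P_t$, $b_t$, $e_t$ (the paper actually proves the more general Theorem \ref{thmLQR2} with a trend term and obtains Theorem \ref{thmLQR} as the case $\alpha_t=0$, but the computation is identical). Your explicit remark that $g_t>0$ must be assumed for the minimiser to exist matches the caveat the authors themselves make about well-posedness of the scheme.
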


We now give several important remarks.

\begin{itemize}
\item We immediately read from  (\ref{eqU}) that $u_t^*$ is affine in the price, and will react linearly with respect to deviations from the benchmark price. Although less clear on the formula, our numerical results show that, when buying (resp. selling), it is \textit{negatively} (resp. \textit{positively}) related to $p_t$ so that it \textit{does} benefit from price excursions (and is aggressive in the money). Note also that $u_t^*$ is also affine in the remaining quantity, which is a direct consequence of the presence of the completion and urgency parameters.
\item Formula (\ref{eqU}) for $u_t^*$ is not closed, since it can be obtained only through an iterative scheme. However, it is exact since there is no approximation in the derivation of the induction. More importantly, the quantities $P_t$, $b_t$, $e_t$ and therefore $k_t$, $f_t$, and $g_t$ can all be computed \textit{before} the beginning of the execution, at time $t=0$, and \textit{only once}. There is no need to recalculate these quantities at each time step. A detailed pseudo-code is given in Algorithm \ref{algoLQR} below.
\item It is common, as in \cite{almgren2001optimal}, to take the urgency parameters proportional to $\sigma_{t, \delta t}^2$. By doing so, we make sure that increased price risk yields more front-loaded strategies.  
\item When $\kappa_t = 0$ (and $\tilde{\beta}_T = +\infty$), the problem corresponds to the classical Almgren and Chriss framework. In that case, we have a closed formula for $u_t^*$ which depends on $q_t$ only and not on $p_t$. When $\kappa_t \neq 0$, solving analytically the above scheme in its full generality seems difficult, which we leave for further research, although we document in Section \ref{SectionContinuous} the analytical solution in the continuous trading framework with no permanent impact ($\mu_t=0$). 
\item An important feature of the above strategy is that the well-posedness of the iteration scheme may break if $g_t$ vanishes, which can happen if $P_t$ is not symmetric and positive definite. While the symmetry of $P_t$ can be proved by induction in a straightforward manner, it is not clear whether positive definiteness holds in general. However, numerical simulations suggest that it does hold in practice for reasonable choices of impact parameters.   
\item In the limit $\delta t \to 0$, we approach the continuous trading framework, and the above formulas can be substantially simplified. In particular, note that in the continuous limit we have $g_t = \eta_t$ so that it can never vanish if temporary impact is present. We have documented this case in Section \ref{SectionContinuous}. 
\end{itemize}

Figure 1 shows an example of an execution with \(\kappa_t > 0\),
executed with an actual price that is not mean-reverting. We have also
represented the Almgren and Chriss case (\(\kappa_t =0\)). Market
volumes are flat, there is a mild urgency level during the algorithm
with the hard completion constraint \(q_T =0\). It is clear that the
trading curve dynamically reacts to price movements, except near the end
of the execution where the completion condition forces the algorithm to
accelerate no matter the value of the price slippage.

\begin{figure}
\centering
\includegraphics{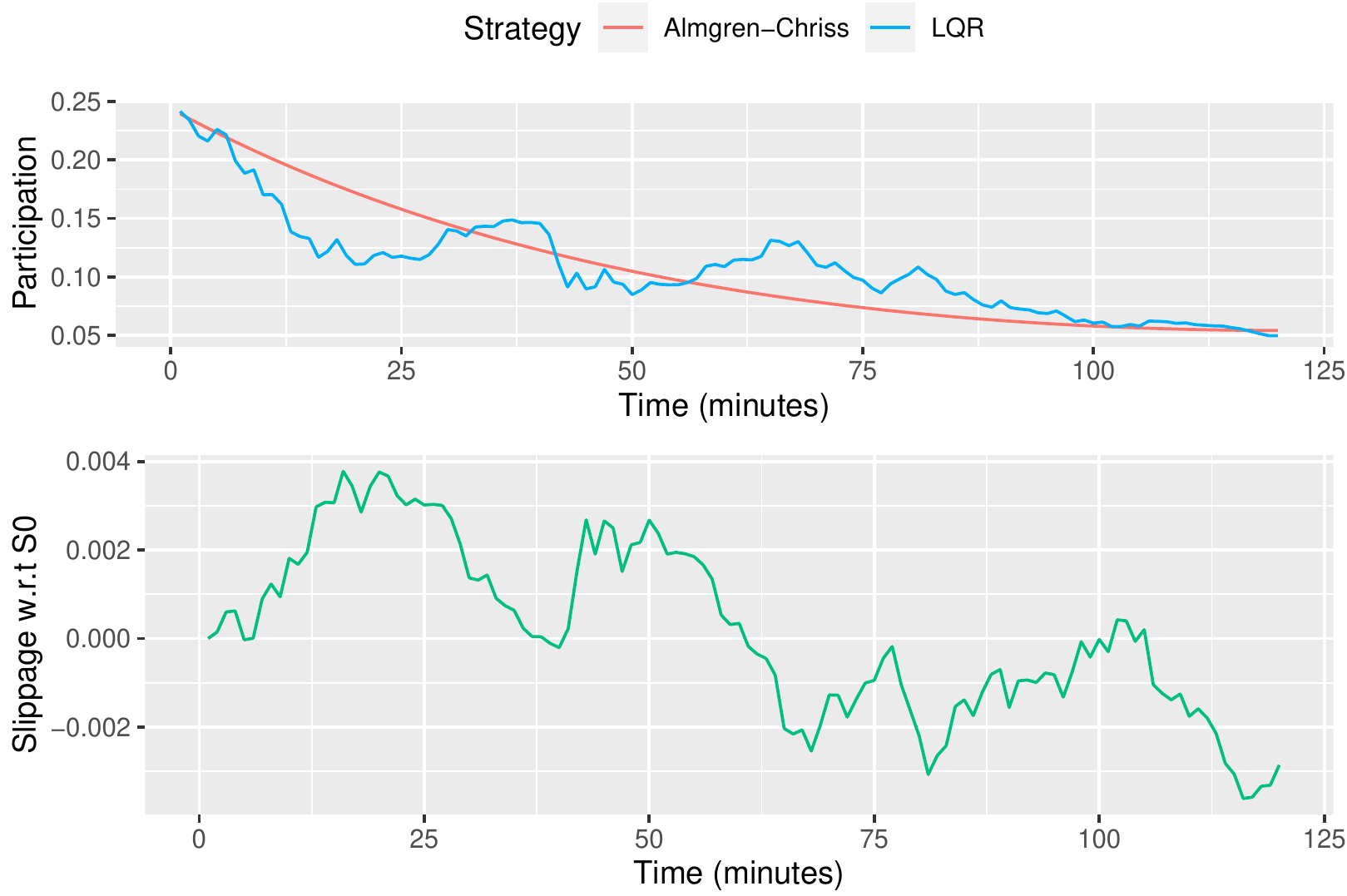}
\caption{Unconstrained LQR strategy execution}
\end{figure}

\begin{algorithm}[H] \label{algoLQR}
 \KwIn{$(a_t,\eta_t,w_t,\Delta_t,\kappa_t,s_t)_{t =0,\cdots, T-\delta t}$.}
 \KwOut{The optimal LQR trading curve $u_0^*$, $u_{\delta t}^*, \cdots, u_{T-\delta t}^*$.}
  Initialize clock time $\tau \leftarrow 0$ \;
  $ P_T \leftarrow \left( \begin{matrix} 0 &0  \\ 0 & \tilde{\beta}_T \end{matrix} \right)$, $b_T \leftarrow (0,0)^T$ \;
  \For{$t=T - \delta t$ down to $0$}{
  $ g_t \leftarrow \eta_t + a_t w_t^T P_{t+\delta t} w_t \delta t $ \;
  $k_t \leftarrow -\frac{1}{g_t}\left((\mathcal I - \kappa_t \delta t \mathcal J)P_{t+\delta t}^Tw_t + \frac{1}{2}e_1\right)$ \;
$ f_t \leftarrow -\frac{b_{t+\delta t}^T w_t + s_t}{2g_t}$ \;
  \If{t > 0}{
    $ P_t \leftarrow  P_{t+\delta t} - \kappa_t \delta t (\mathcal J P_{t+\delta t} + P_{t+\delta t} \mathcal J) + \kappa_t^2 (\delta t)^2 \mathcal J P_{t+\delta t} \mathcal J + \left[\Delta_t  - a_tg_tk_t k_t^T\right]\delta t$ \;
    $ b_t \leftarrow b_{t+\delta t} -\left\{2a_tg_tf_tk_t + \kappa_t \mathcal Jb_{t+\delta t}   \right\}\delta t$ \;
  }
  }
  \For{$t = 0$ to $T-\delta t$}{
  Wait for clock time $\tau \geq t$\;
  Fetch $Q_t$, $\mathfrak{S}_t$, $S_t$\;
  $X_t \leftarrow \left(  \frac{S_t - \mathfrak{S}_t }{S_0}, \frac{Q_t}{Q_0}\right)^T$\;
  $u_t^* \leftarrow k_t^T X_t + f_t$\;
  }
 \caption{Online calculation for the LQR optimal policy $(u_t^*)_{0 \leq t \leq T - \delta t}$.}
\end{algorithm}

\subsection*{Trading with an additional trend signal}

For the sake of completeness and since it is harmless for the
optimization procedure, we now examine the slightly more general price
slippage model \begin{align}
p_{t+\delta t} = (1 - \kappa_t \delta t)p_t + \mu_t x_t \delta t + \alpha_t \delta t  + \sigma_{t, \delta t} z_t, 
\end{align} where \(\alpha_t\) is a trend signal which impacts the
direction in which the price slippage is drifting. It encompasses any
information that may be available to the investor who anticipates a
shift in the price over the period \([0,T]\). The new state transition
equation can be written as

\begin{align} \label{eqState2}
X_{t+\delta t} = \left( \begin{matrix}  1- \kappa_t \delta t &0  \\ 0 &1 \end{matrix} \right)  X_t + a_t u_t w_t \delta t +  \left( \begin{matrix} \alpha_t \delta t + \sigma_{t, \delta t} z_t \\ 0 \end{matrix} \right).
\end{align} The LQR can be adapted and yields the following result.

\begin{theorem} \label{thmLQR2} (Unconstrained LQR with a trend signal)
Let $X = (p,q)^T$, $\mathcal I = \text{diag}(1,1)$, $\mathcal{J} = \text{diag}(1,0)$, and $e_1 = (1,0)^T$. The value function $H$ is quadratic in $X$, of the form $H(t,X) = X^T P_t X + b_t^T X + e_t$. Moreover, the optimal control $u_t^*$ is of the form 
$$ u_t^* = k_t^T X_t + f_t $$
with 
$$k_t = -\frac{1}{g_t}\left((\mathcal I - \kappa_t \delta t \mathcal J)P_{t+\delta t}^Tw_t + \frac{1}{2}e_1\right),$$
$$ f_t = -\frac{\left[b_{t+\delta t}^T + 2\alpha_t \delta te_1^TP_{t+\delta t}\right]w_t + s_t}{2g_t},$$
$$ g_t = \eta_t + a_t w_t^T P_{t+\delta t} w_t \delta t .$$
Moreover, we have the backward iteration scheme 
$$ P_t =  P_{t+\delta t} - \kappa_t \delta t (\mathcal J P_{t+\delta t} + P_{t+\delta t} \mathcal J) + \kappa_t^2 (\delta t)^2 \mathcal J P_{t+\delta t} \mathcal J + \left[\Delta_t  - a_tg_tk_t k_t^T\right]\delta t,$$
$$ b_t = b_{t+\delta t} -\left\{2a_tg_tf_tk_t + \kappa_t \mathcal Jb_{t+\delta t}  - 2\alpha_t  (\mathcal I - \kappa_t \delta t\mathcal J) P_{t+\delta t} e_1  \right\}\delta t,$$
$$e_t = e_{t+\delta t} +  [\sigma_{t,\delta t}^2+ \alpha_t^2(\delta t)^2] e_1^T P_{t+\delta t}e_1 + \left[  \alpha_t  b_{t+\delta t}^Te_1   - a_t g_t f_t^2 \right]  \delta t $$
where $\Delta_t = \left( \begin{matrix}  0 &0  \\ 0 & \beta_t \end{matrix} \right)$, and with the terminal conditions $ P_T = \left( \begin{matrix} 0 &0  \\ 0 & \tilde{\beta}_T \end{matrix} \right)$, $b_T = (0,0)^T$, $e_T=0$. 
\end{theorem}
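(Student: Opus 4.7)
The argument mirrors that of Theorem \ref{thmLQR}, with the only new ingredient being the deterministic drift $\alpha_t \delta t\, e_1$ appearing in the state transition (\ref{eqState2}). I would proceed by backward induction on $t$, showing simultaneously that $H(t,X) = X^T P_t X + b_t^T X + e_t$ and that the stated affine policy is optimal. The base case at $t=T$ follows from the terminal cost $J_T(X) = \tilde{\beta}_T q^2$ together with $b_T = 0$, $e_T = 0$.

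For the inductive step, write $M_t = \mathcal{I} - \kappa_t \delta t \mathcal{J}$ and $Y(X,u) = M_t X + a_t u w_t \delta t + \alpha_t \delta t\, e_1$ for the deterministic part of $X_{t+\delta t}$ conditional on $(X_t,u_t) = (X,u)$. Since $z_t$ is centered and enters linearly,
\[
\mathbb{E}\bigl[H(t+\delta t, X_{t+\delta t}) \mid X_t = X, u_t = u\bigr] = Y^T P_{t+\delta t} Y + b_{t+\delta t}^T Y + e_{t+\delta t} + \sigma_{t,\delta t}^2 \, e_1^T P_{t+\delta t} e_1.
\]
Adding the stage cost $j_t(X,u)\delta t$ and using the inductively preserved symmetry of $P_{t+\delta t}$, the bracket in the Bellman equation becomes a strictly convex quadratic $a_t g_t \delta t\, u^2 + B(X)\,u + C(X)$ in $u$, whose minimizer $u^* = -B(X)/(2 a_t g_t \delta t)$ is affine in $X$. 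Separating the $X$-dependent and $X$-independent parts of $B$ yields the stated $k_t$ and $f_t$; the key observation is that the new drift contributes a single constant term $2\alpha_t \delta t\, w_t^T P_{t+\delta t} e_1$ to $B$, which lands inside $f_t$ but leaves $k_t$ identical to its Theorem \ref{thmLQR} expression.

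Substituting $u^*$ back and using the identity $a (u^*)^2 + b u^* + c = c - a(u^*)^2$ at the minimum, I would sort the resulting $H(t,X)$ into quadratic, linear, and constant pieces in $X$. The quadratic piece gathers $X^T M_t^T P_{t+\delta t} M_t X$, $X^T \Delta_t X \, \delta t$, and $-a_t g_t \delta t\, (k_t^T X)^2$; expanding $M_t^T P_{t+\delta t} M_t$ via $\mathcal{J}^T = \mathcal{J}$ recovers the $P_t$ recursion, unchanged from Theorem \ref{thmLQR}. The linear piece aggregates $M_t^T b_{t+\delta t}$, the new cross-term $2\alpha_t \delta t\, M_t^T P_{t+\delta t} e_1$, and $-2 a_t g_t f_t \delta t\, k_t$, which regroups via $M_t = \mathcal{I} - \kappa_t \delta t \mathcal{J}$ into the stated $b_t$. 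The constant piece collects $e_{t+\delta t}$, $\sigma_{t,\delta t}^2 e_1^T P_{t+\delta t} e_1$, the new contributions $\alpha_t^2 (\delta t)^2 e_1^T P_{t+\delta t} e_1$ and $\alpha_t \delta t\, b_{t+\delta t}^T e_1$, and $-a_t g_t f_t^2 \delta t$, producing the $e_t$ recursion.

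The main difficulty is purely bookkeeping: tracking the three places where $\alpha_t \delta t\, e_1$ enters the expansion of $Y^T P_{t+\delta t} Y$ (the cross with $M_t X$, the cross with $a_t u w_t \delta t$, and the self-contribution) together with the cross in $b_{t+\delta t}^T Y$, and checking that these contribute only to $f_t$, $b_t$, and $e_t$ while leaving $k_t$ and $P_t$ unchanged. Well-posedness of the one-step minimization, i.e.\ $g_t > 0$, follows as in Theorem \ref{thmLQR}, since the drift $\alpha_t$ does not enter the coefficient of $u^2$ and so does not affect the inductive preservation of symmetry and positive definiteness of $P_t$.
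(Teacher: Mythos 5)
Your proposal is correct and follows essentially the same route as the paper's own proof: backward induction on $t$, expansion of $\mathbb{E}[H(t+\delta t,\cdot)]$ around the deterministic part of the state transition (the variance contributing only $\sigma_{t,\delta t}^2 e_1^T P_{t+\delta t} e_1$), minimization of the resulting quadratic in $u$, and sorting of the substituted expression into quadratic, linear, and constant pieces in $X$, with the drift $\alpha_t\delta t\,e_1$ feeding only into $f_t$, $b_t$, and $e_t$. One caveat: your closing claim that positive definiteness of $P_t$ is inductively preserved (hence $g_t>0$) is stronger than what the paper establishes --- the paper explicitly remarks that positive definiteness is not proved in general and is only observed numerically, so well-posedness of the one-step minimization should be treated as an assumption ($g_t \neq 0$) rather than a consequence of the induction.
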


\subsection*{Targeting the time-varying market VWAP}

It is interesting to consider what happens when the benchmark price is
set as the dynamic market VWAP (excluding the algorithm's own
contribution for simplicity),
\(\mathfrak S_t = V_t^{-1}\sum_{\tau = 0}^{t} v_\tau S_\tau \delta t\)
where \(V_t = \sum_{\tau =0}^{t} v_\tau \delta t\). The TWAP case is
obtained, as usual, by setting \(v_\tau \equiv 1\). In that case,
straightforward calculations give
\[ p_{t+\delta t} = \left(1 - \frac{v_{t+\delta t}}{V_{t+\delta t}} \delta t \right) p_t + \left(1+\frac{v_{t+\delta t}}{V_{t+\delta t}} \delta t\right)\frac{S_{t+\delta t} - S_t}{S_0},\]
so that independently of the dynamics of the price process \(S_t\), the
slippage already presents mean-reversion. In that case, and in the
absence of any other information, an option consists in setting
\(\kappa_t = a\frac{v_t}{V_{t+\delta t}}\) where \(v\) and \(V\) are the
expected volumes introduced in the model section, and \(a\) is a
parameter controlling the intensity of the signal. Here, the algorithm
will be again aggressive in the money, in the sense that it will
accelerate when the spot price \(S_t\) is above the market VWAP. Note
however that the mean-reversion parameter \(\kappa_t\) is strong at
first, but then asymptotically vanishes toward the end of long
executions (indeed, in general we have
\(\frac{ v_{t+\delta t}}{ V_{t+\delta t}} \sim \frac{C}{t} \to 0\) when
\(t \to T \gg 1\)) so that the trading curve will loose its
price-reactiveness over time. Interestingly, even small values for \(a\)
(see below) already yield quite reactive strategies for standard market
impact parameters.

Figure 2 shows an example of an execution with benchmark
\(\mathfrak S_t = VWAP_t\), \(\kappa_t\) as above and \(a = 0.015\). As
before, we have also represented the Almgren and Chriss case
(\(\kappa_t = 0\)), market volumes are flat, there is a mild urgency
level during the algorithm with the hard completion constraint
\(q_T =0\).

\begin{figure}
\centering
\includegraphics{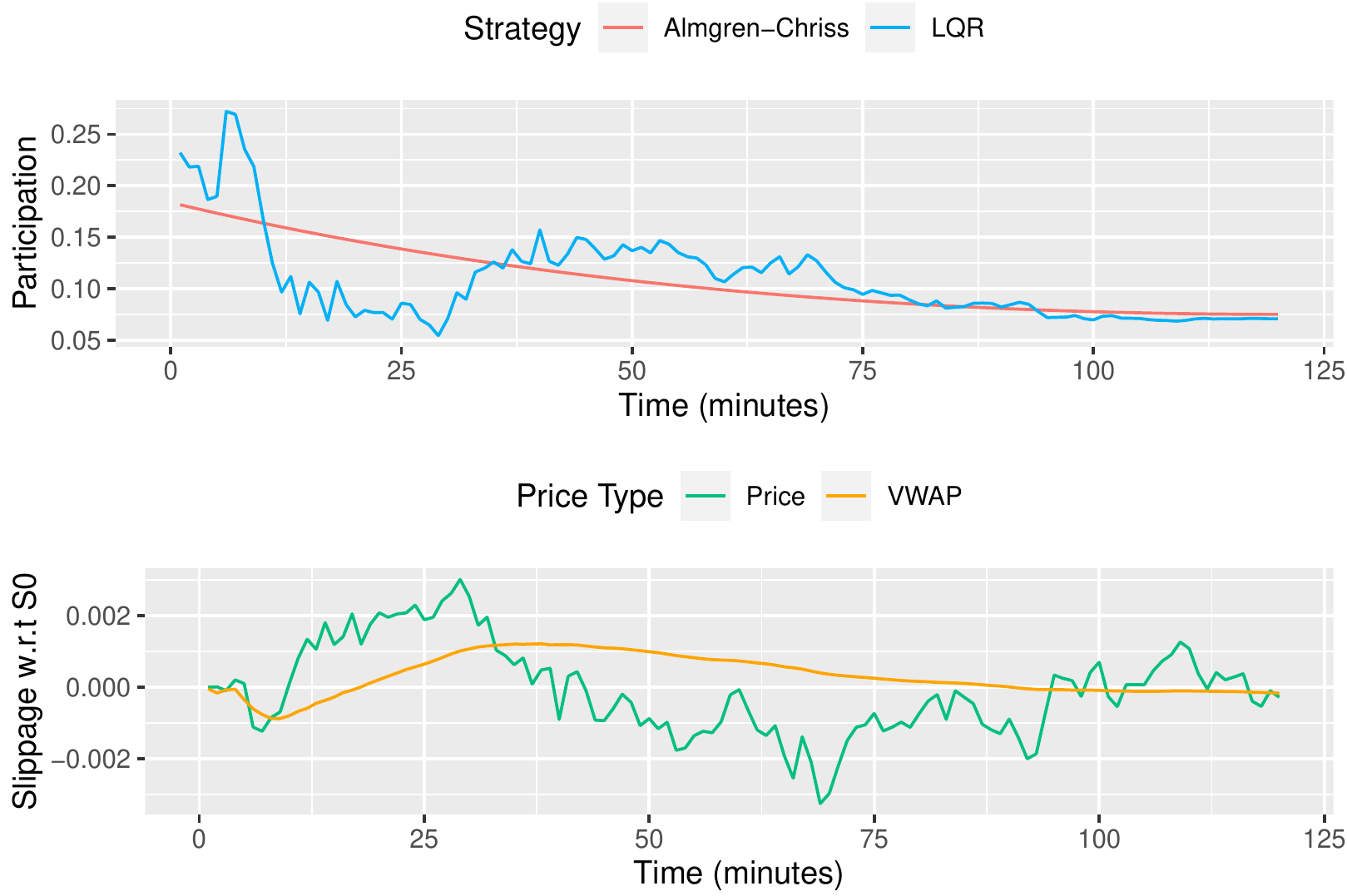}
\caption{Unconstrained LQR strategy execution when targeting the
time-varying market VWAP}
\end{figure}

\section{Constrained LQR via MPC} \label{SectionMPC}

As explained in the introduction, the LQR from the previous section
presents one major drawback:
\textit{there is no guarantee that $u_t^*$ will remain positive during the trading period}.
In fact, the larger the mean-reversion signal, the more the strategy
deviates from its Almgren-Chriss equivalent \(\kappa_t =0\) and the more
likely are negative values for \(u_t^*\). This feature is, in general,
to be avoided because (i) from a practical point of view, round trip
trades are unwanted by most traders and (ii) as pointed out in the
previous section, our global cost function (\ref{eqGlobalCosts})
implicitly assumes \(u_t\) to be non-negative through the presence of
the linear spread term \(s_tu_t\), which should be \(s_t|u_t|\) in all
generality, thus breaking the LQR structure of the problem. The simplest
remedy against such a feature is a hard floor on the rate (this is, for
instance, suggested in \cite{busseti2015volume}, p.13)
\[ \tilde{u}_t^* = \max(k_t^T X_t + f_t, 0).\] However it is not
satisfactory for two reasons. First, since \(\tilde{u}_t^* \geq u_t^*\),
the strategy based on \(\tilde{u}_t^*\) will tend to complete the order
too early and will not benefit from late opportunities that may present
themselves around the horizon time \(T\). Next, if at a given time
\(t=t_0\) the threshold condition is met and \(\tilde{u}_{t_0}^* = 0\),
this fact \textit{should impact} the subsequent participation rates
\(\tilde{u}_{t_0 + \delta t}^*, ..., \tilde{u}_{T - \delta t}^*\),
which, with the above rule, would not be the case.

In this work, we set aside the hard threshold method and bring forward a
more flexible solution based on the celebrated Model Predictive Control
(MPC) framework. The MPC method, sometimes also called Receding Horizon
Control (RHC) in the literature (see \cite{bemporad2002explicit},
\cite{bemporad2003suboptimal}), has a long history in control theory
which can be traced back to the late 1970s (\cite{richalet1978model},
\cite{cutler1980dynamic}, \cite{garcia1989model},
\cite{morari1999model}), with a tremendous number of applications in the
industry, ranging from temperature control in chemistry
(\cite{martin1986predictive}) to missile guidance
(\cite{li2014missile}). Table 6 in \cite{qin2003survey} reports more
than 2,000 applications in the refining, chemicals, petrochemicals,
mining, defense, and automotive industries to name a few. The model owes
its popularity to its ability to deal with quadratic controls featuring
linear constraints in a relatively fast way following a quadratic
programming algorithm. Although the MPC method is heuristic, it is
optimal in the absence of constraints (see Proposition \ref{propMPC}),
and otherwise it remains numerically close to the LQR strategy while
avoiding trading curves that break these constraints (see Figure 3 for a
visual example). In other words, the MPC method is equivalent to the LQR
of Theorem \ref{thmLQR} when the variables are free of constraints
(although in that case, it is recommended to use the iterative scheme in
Theorem \ref{thmLQR} for obvious computation costs reasons).

the MPC methodology consists in finding an optimal
\textit{deterministic} curve (that is, conditionally to the information
available at the current bucket) over a time window spanning part or all
of the remaining future periods, and then keeping only its first element
as our policy for the upcoming bucket. At the beginning of the next
period, a new deterministic curve is calculated, and the procedure keeps
going until the horizon time \(T\) is reached. To obtain such a
deterministic behavior, the state process is assumed to follow a
Certainty Equivalent (CE) dynamics, where all sources of randomness are
cancelled. Although such hypothesis may appear extremely unrealistic at
first glance, it turns out, as explained below, that this has actually
little to no impact on the problem at hand while greatly simplifying the
analysis. In our optimal trading framework, the method goes as follows.
First, the CE dynamics corresponds to setting
\(\sigma_{t, \delta t} = 0\) in the state process dynamics. This yields
the state evolution equation
\[  X_{t+\delta t} = \left( \begin{matrix}  1- \kappa_t \delta t &0  \\ 0 &1 \end{matrix} \right)  X_t + a_t u_t w_t \delta t + \alpha_t e_1 \delta t.\]
Next, when at a given time \(t\), denote the expected cost as a
quadratic function in \(u_t, u_{t+\delta t}, ..., u_{T-\delta t}\), by
\(C_t(u_t, u_{t+\delta t}, ..., u_{T-\delta t})\). We give in Theorem
\ref{thmMpc} below the exact shape of \(C_t\). At this point, define
\begin{eqnarray}\label{eqMPC}
(\bar{u}_{t},\bar{u}_{t+\delta t}...,\bar{u}_{T-\delta t}) = \text{argmin}_{u_t \geq 0,...,u_{T-\delta t} \geq 0}  C_t(u_t, u_{t+\delta t}, ..., u_{T-\delta t})
\end{eqnarray} which is the solution of a quadratic optimization under
linear constraints, that can be calculated following fast and standard
quadratic programming methods. Finally, take
\[ \tilde{u}_t^* = \bar{u_t},\] and discard
\(\bar{u}_{t+\delta t}...,\bar{u}_{T-\delta t}\). At time
\(t + \delta t\), start over the optimization procedure in
\((u_{t+\delta t}, u_{t+2\delta t}...,u_{T-\delta t})\), calculate
\(\tilde{u}_{t+\delta t}^*\), and so on. Algorithm \ref{algoMPC}
documents the above calculations in details. The MPC approach is
appealing for several reasons. First, since it re-computes the
certainty-equivalent optimal solution at each time step, the fact that
\(\tilde{u}_{t_0}^*\) reaches the constraint \emph{does} impact the
shape of the whole remaining trading curve
\(\tilde{u}_{t_0 + \delta t}^*, ..., \tilde{u}_{T - \delta t}^*\).
Moreover, more general constraints can be incorporated in the
optimization, as long as they remain linear in the control participation
rates. For instance, participation rates may be individually capped
(\(u_t \leq u_{max}\)), or constraints could be formulated on their
running means (\(k^{-1}(u_t+...+u_{t+k \delta t}) \geq u_{min}\) ), and
so on. We are now ready to state our main results.

\begin{theorem} \label{thmMpc}
For $s \leq t$, let $u_{t:(T-\delta t)} = (u_t, u_{t+\delta t}, ..., u_{T-\delta t})^T \in \mathbb{R}^k$ for some $k \leq n$, and $\pi_{s:t}^\kappa = \prod_{\theta = s}^t (1-\kappa_\theta \delta t)$. For $t \leq \tau$, 
\begin{eqnarray} \label{eqL}
L_\tau = \left(\begin{matrix} \pi_{(t+\delta t):(\tau-\delta t)}^\kappa v_t\mu_{t}\delta t & \pi_{(t+2\delta t):(\tau-\delta t)}^\kappa v_{t+\delta t}\mu_{t+\delta t}\delta t &...&  v_{\tau-\delta t}\mu_{\tau-\delta t}\delta t & 0 &0& ...\\  -a_{t}\delta t & -a_{t+\delta t}\delta t& ... &   -a_{\tau-\delta t}\delta t&0 & 0&... \\0&0&...&0&1&0&...\end{matrix}\right) \in \mathbb R^{3 \times k},
\end{eqnarray}
and
\begin{eqnarray} \label{eqY}
Y_\tau = \left(\begin{matrix}   \pi_{t:(\tau-\delta t)}^\kappa p_{t} + \sum_{\theta = t}^{\tau-\delta t} \pi_{\theta:(\tau-\delta t)}^\kappa \alpha_\theta \delta t\\q_{t} \\0\end{matrix} \right).
\end{eqnarray}
Then we have the representation 
\begin{eqnarray*} 
C_t(u_t, u_{t+\delta t}, ..., u_{T-\delta t}) &=& u_{t:(T-\delta t)}^T \mathbf{A}_t u_{t:(T-\delta t)}+\mathbf{b}_t^T u_{t:(T-\delta t)}  + \mathbf{c}_t
\end{eqnarray*}
where
\begin{eqnarray*}
\mathbf{A}_t &=& \left\{\sum_{\tau = t}^{T-\delta t} L_\tau^T M_\tau L_\tau \right\}\delta t + L_T^T M_T L_T, \\ \mathbf{b}_t^T &=& \left\{\sum_{\tau = t}^{T-\delta t} 2 Y_{\tau}^T M_\tau L_\tau + N_\tau^T L_\tau \right\} \delta t + 2 Y_{T}^T M_T L_T + N_T^T L_T,
\end{eqnarray*}
$\mathbf{c}_t$ is a constant term, and where for $t< T$ we have $$M_t = \left(\begin{matrix} 0 &0&a_t/2\\0&  \beta_t& 0 \\a_t/2&0&a_t\eta_t\end{matrix}\right), \text{ and } N_t =   \left(\begin{matrix}0\\0\\ a_ts_t\end{matrix}\right)$$
and at time $T$ 
$$M_T = \left(\begin{matrix} 0 &0&0\\0&  \tilde{\beta}_T& 0 \\0&0&0\end{matrix}\right), \text{ and } N_t =   \left(\begin{matrix}0\\0\\0\end{matrix}\right).$$
\end{theorem}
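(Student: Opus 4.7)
The plan is to work directly in the certainty-equivalent dynamics, unroll the linear state recursion for $X_\tau = (p_\tau, q_\tau)^T$ explicitly in the past controls, substitute the resulting affine expression into the quadratic stage and terminal costs, and collect terms. Iterating the CE state equation $X_{\tau+\delta t} = (\mathcal I - \kappa_\tau \delta t \mathcal J)X_\tau + a_\tau u_\tau w_\tau \delta t + \alpha_\tau e_1 \delta t$ from the current time $t$ yields, by a straightforward induction, an affine expression for $(p_\tau, q_\tau)^T$ in the past controls. The $p$-coefficients involve the mean-reversion products $\pi_{\cdot:\cdot}^\kappa$ arising from the accumulated factors $(1-\kappa_\theta \delta t)$, while the $q$-coefficients are just the depletion terms $-a_\theta \delta t$ coming from $q_{\tau+\delta t} = q_\tau - a_\tau u_\tau \delta t$. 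Adjoining $u_\tau$ as a third coordinate (padded to zero at $\tau=T$, consistent with the zero third row and column of $M_T$) produces exactly $(p_\tau, q_\tau, u_\tau)^T = L_\tau\, u_{t:(T-\delta t)} + Y_\tau$, where the third row of $L_\tau$ is the canonical selector extracting $u_\tau$ from the control vector and $Y_\tau$ collects the contributions from the initial state $(p_t, q_t)$ and the drift $\alpha$.

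Next, I would rewrite the stage and terminal costs as quadratic-plus-affine forms in the augmented vector. A direct check shows that for $\tau < T$,
\begin{equation*}
j_\tau(X_\tau, u_\tau)\delta t = \bigl[(p_\tau, q_\tau, u_\tau)\, M_\tau\, (p_\tau, q_\tau, u_\tau)^T + N_\tau^T (p_\tau, q_\tau, u_\tau)^T\bigr]\delta t,
\end{equation*}
with the off-diagonal $a_\tau/2$ entries of $M_\tau$ symmetrizing the cross term $a_\tau p_\tau u_\tau$ from the execution slippage, and similarly $J_T(X_T) = (p_T, q_T, 0)\, M_T\, (p_T, q_T, 0)^T$ reproduces $\tilde\beta_T q_T^2$. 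Substituting $(p_\tau, q_\tau, u_\tau)^T = L_\tau u_{t:(T-\delta t)} + Y_\tau$ and expanding the quadratic form splits $C_t$ into a quadratic piece in $u_{t:(T-\delta t)}$ with matrix $\sum_{\tau=t}^{T-\delta t} L_\tau^T M_\tau L_\tau \delta t + L_T^T M_T L_T$, a linear piece arising from the cross term $2 Y_\tau^T M_\tau L_\tau$ together with $N_\tau^T L_\tau$, and a constant piece absorbing all $Y_\tau^T M_\tau Y_\tau$ and $N_\tau^T Y_\tau$ contributions (irrelevant for the subsequent argmin). This reproduces exactly the claimed $\mathbf{A}_t$, $\mathbf{b}_t$, and $\mathbf{c}_t$.

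The proof is essentially linear algebra once the correct augmentation is in place; the only genuine obstacle is bookkeeping, namely keeping the indices of the mean-reversion products $\pi_{\cdot:\cdot}^\kappa$ aligned when iterating the price recursion (each iteration shifts the starting index of the product by $\delta t$), and making sure the zero-padding of $L_\tau$ and $Y_\tau$ at positions corresponding to controls strictly after $\tau$ respects the fixed ambient dimension $k$ of $u_{t:(T-\delta t)}$, so that the matrix sums $\sum_\tau L_\tau^T M_\tau L_\tau$ are all well-defined as elements of $\mathbb R^{k \times k}$.
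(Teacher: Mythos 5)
Your proposal is correct and follows essentially the same route as the paper: augment the state to $\tilde X_\tau = (p_\tau, q_\tau, u_\tau)^T$, write each stage and terminal cost as a quadratic-plus-affine form with the matrices $M_\tau$ and vectors $N_\tau$, unroll the CE recursion to get $\tilde X_\tau = L_\tau u_{t:(T-\delta t)} + Y_\tau$, and expand. The bookkeeping points you flag (alignment of the $\pi^\kappa$ indices, zero-padding of $L_\tau$ to the fixed ambient dimension $k$, and the harmlessness of the third coordinate at $\tau = T$ owing to the vanishing third row and column of $M_T$) are exactly the details the paper's proof relies on.
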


The next proposition shows that MPC and LQR are equivalent in the
absence of constraints. The main reason behind this result is that, all
things being equal, the LQR coefficients are independent from the
volatility \(\sigma_{t, \delta t}\) which is the unique source of
randomness in the state evolution equation. Indeed, remark that only the
constant \(e_t\) depends on volatility in Theorem \ref{thmLQR}, which
plays no role in the optimization. In other words, the LQR strategy is
the solution to the certainty equivalent problem with no constraints.
This also suggests that assuming the CE dynamics for the MPC approach
has little impact on the numerical solution.

\newtheorem{proposition}{Proposition} 
\begin{proposition} \label{propMPC}
In the absence of constraints, the optimal solutions respectively obtained by MPC and by LQR coincide.
\end{proposition}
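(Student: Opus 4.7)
The plan is to reduce the equivalence between MPC and LQR, in the unconstrained case, to the uniqueness of the minimizer of a strictly convex quadratic, by showing that both methods are solving the same certainty-equivalent optimization.

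The key observation is that in the backward scheme of Theorem~\ref{thmLQR}, the volatility $\sigma_{t,\delta t}$ appears only in the scalar update for $e_t$, and does not enter $P_t$, $b_t$, $g_t$, $k_t$, or $f_t$. Since $e_t$ plays no role in the pointwise minimization that defines $u_t^*$, the LQR feedback $u_t^* = k_t^T X_t + f_t$ is simultaneously optimal for the stochastic dynamics and for the CE dynamics obtained by setting $\sigma_{t,\delta t}\equiv 0$. Hence, starting from a realized state $X_t$ and propagating deterministically under the CE dynamics with the LQR feedback, one obtains a sequence $(\hat u_t,\ldots, \hat u_{T-\delta t})$ which — because there is no noise and open-loop and closed-loop policies are in bijection — minimizes $C_t(u_t,\ldots, u_{T-\delta t})$ among all deterministic controls.

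Next I would invoke the explicit quadratic representation of $C_t$ from Theorem~\ref{thmMpc}. Under the positive definiteness of $\mathbf{A}_t$, the unconstrained quadratic program admits a unique minimizer $(\bar u_t,\ldots,\bar u_{T-\delta t})$; comparing with the LQR-induced trajectory $(\hat u_\tau)$ forces $\bar u_\tau = \hat u_\tau$ for every $\tau$, so that the MPC policy satisfies $\tilde u_t^* = \bar u_t = \hat u_t = k_t^T X_t + f_t = u_t^*$, as desired.

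The principal obstacle is a clean proof that $\mathbf{A}_t$ is positive definite. I would proceed by induction in $\tau$, matching the stage-$\tau$ contribution $L_\tau^T M_\tau L_\tau \delta t$ against the Hessian $g_\tau$ of the Bellman one-step minimization in Theorem~\ref{thmLQR}: using the CE linear state transition to express $X_\tau$ as an affine function of $u_{t:(\tau-\delta t)}$, these contributions combine into a sum whose diagonal picks up the strictly positive $a_\tau \eta_\tau \delta t$ terms coming from $M_\tau$, which already suffices. Once this is in place, the rest of the argument is the one-line uniqueness statement for the minimizer of a strictly convex quadratic.
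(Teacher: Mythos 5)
Your core argument is exactly the paper's: the volatility $\sigma_{t,\delta t}$ enters only the constant $e_t$ in the backward scheme of Theorem~\ref{thmLQR} (or \ref{thmLQR2}), so $k_t$ and $f_t$ are unchanged under the certainty-equivalent dynamics, hence the LQR feedback solves the CE problem, which is by definition what the unconstrained MPC program of Theorem~\ref{thmMpc} solves. That is the whole of the paper's proof, and your first two paragraphs reproduce it faithfully.

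Your third paragraph goes beyond the paper in trying to establish uniqueness of the minimizer via positive definiteness of $\mathbf{A}_t$, which is a legitimate concern (without it one only gets that the LQR trajectory is \emph{a} minimizer of $C_t$, not that it equals \emph{the} MPC output). However, the argument you sketch does not work as written: having strictly positive diagonal entries does not imply positive definiteness, and the stage matrices $M_\tau$ are in fact indefinite --- the $(p,u)$ block $\left(\begin{smallmatrix}0 & a_\tau/2\\ a_\tau/2 & a_\tau\eta_\tau\end{smallmatrix}\right)$ has negative determinant --- so $\sum_\tau L_\tau^T M_\tau L_\tau$ is not a sum of positive semidefinite terms and the conclusion cannot be read off the diagonal. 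A correct route would identify the Hessian of $C_t$ in $u_{t:(T-\delta t)}$ with the product of the one-step curvatures $a_\tau g_\tau$ from the Bellman recursion (so that positive definiteness of $\mathbf{A}_t$ is equivalent to $g_\tau>0$ for all $\tau$), a condition the paper itself acknowledges it cannot verify in general and only supports numerically. So the gap you flag is real, your proposed fix is not, and the paper simply leaves the point implicit.
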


\begin{figure}
\centering
\includegraphics{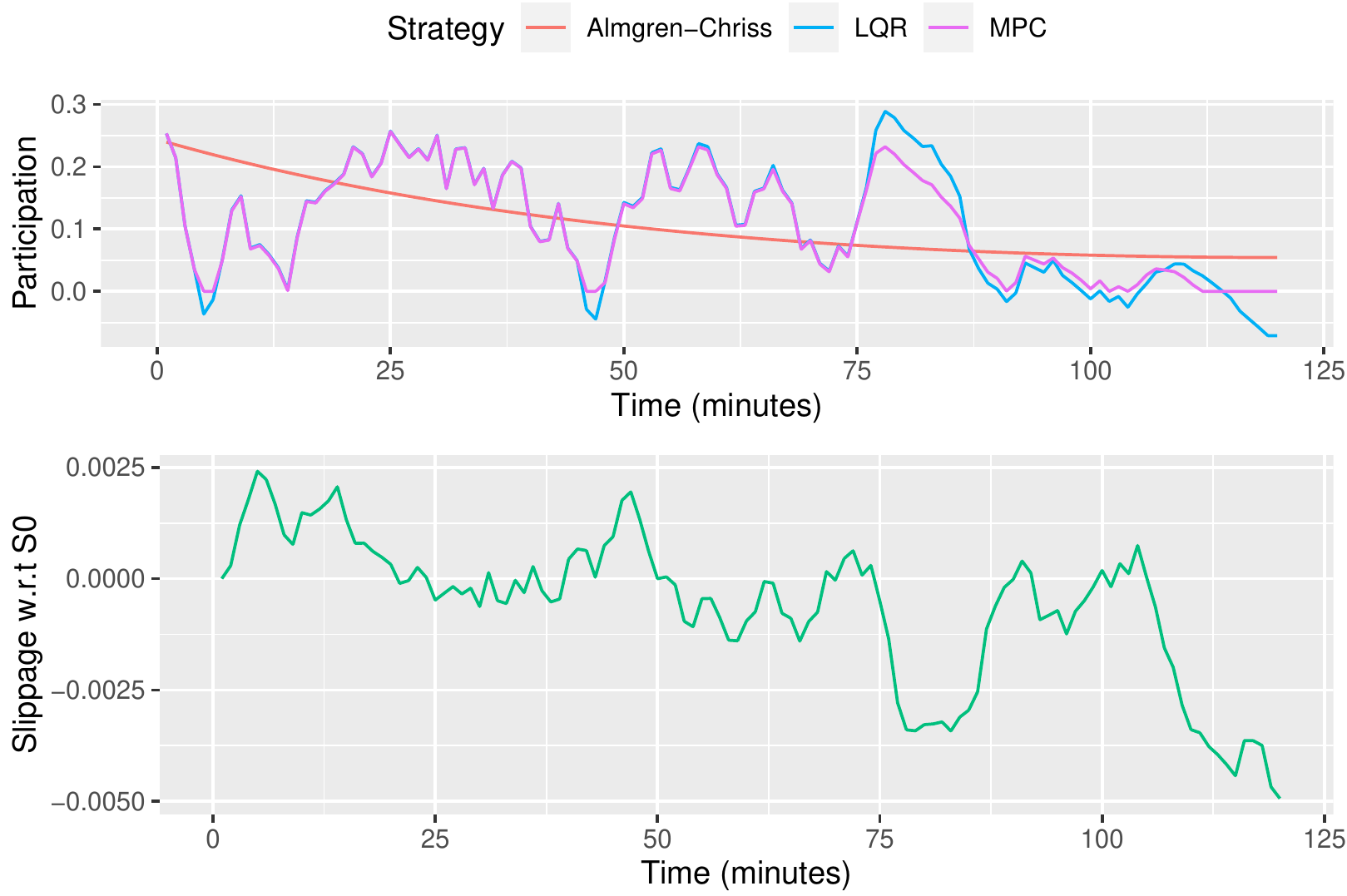}
\caption{MPC strategy execution}
\end{figure}

\begin{algorithm}[H]\label{algoMPC}
 \KwIn{$(M_t, N_t)_{t =0,\cdots, T}, (a_t, v_t, \mu_t, \eta_t, \alpha_t, \kappa_t)_{t =0,\cdots, T-\delta t}$.}
 \KwOut{The optimal MPC trading curve $u_0^*$, $u_{\delta t}^*, \cdots, u_{T-\delta t}^*$.}
  Initialize clock time $\tau \leftarrow 0$ \;
  \For{$t = 0$ to $T-\delta t$}{
    Wait for clock time $\tau \geq t$\;
    Fetch $Q_t$, $\mathfrak{S}_t$, $S_t$\;
    $p_t \leftarrow  \frac{S_t - \mathfrak{S}_t }{S_0}, q_t \leftarrow\frac{Q_t}{Q_0}$\; 
  \For{$\tau = t$ to $T-\delta t$}{
    $L_\tau \leftarrow$ (\ref{eqL})\;
    $Y_\tau \leftarrow$ (\ref{eqY})\;
  }
  $\mathbf{A}_t \leftarrow \left\{\sum_{\tau = t}^{T-\delta t} L_\tau^T M_\tau L_\tau \right\}\delta t + L_T^T M_T L_T$ \;
    $\mathbf{b}_t^T \leftarrow \left\{\sum_{\tau = t}^{T-\delta t} 2 Y_{\tau}^T M_\tau L_\tau + N_\tau L_\tau \right\} \delta t + 2 Y_{T} M_T L_T + N_T L_T$\;
    $(\bar{u}_{t},\bar{u}_{t+\delta t}...,\bar{u}_{T-\delta t}) \leftarrow \text{argmin}_{u_t \geq 0,...,u_{T-\delta t} \geq 0}u_{t:(T-\delta t)}^T \mathbf{A}_t u_{t:(T-\delta t)}+\mathbf{b}_t^T u_{t:(T-\delta t)}$\;
    $u_t^* \leftarrow \bar{u}_{t}$\;
  }
 \caption{Online calculation for the MPC optimal policy $(u_t^*)_{0 \leq t \leq T - \delta t}$.}
\end{algorithm}

Figure 3 shows an example of an MPC execution, with a strong signal
\(\kappa_t > 0\), executed with an actual price that is not
mean-reverting. We have also represented the Almgren and Chriss case
(\(\kappa_t =0\)), and the unconstrained LQR case. Market volumes are
flat, there is a mild urgency during the algorithm with the hard
completion constraint \(q_T =0\). Notice how the MPC curve remains very
close to the LQR strategy while satisfying the positivity constraints.

We finally examine the practical implementation issues that may arise
wen applying the above MPC approach for long orders. It is indeed
important to note that when the number of buckets \(n\) is large,
finding the value of the quadratic program (\ref{eqMPC}) may be
challenging, in particular if it is done in an online manner during the
meta-order's lifespan\footnote{For an order following 5-minute buckets
  spanning the whole trading day, this gives \(n=102\) constraints at
  the beginning of the order for most European markets, which remains
  feasible with reasonable computing power, and is satisfying for most
  practical cases. However, for shorter buckets, the problem quickly
  becomes very involved.}. We suggest two potential solutions to this
issue, although their practical implementation is beyond the scope of
this paper and will be examined in future research.

\begin{itemize}
\item Following the original spirit of Receding Horizon Control, one option consists in reducing the time horizon in the optimization (\ref{eqMPC}). Specifically, we may define $K_{RHC}$ as the maximum number of buckets the scheduler looks ahead, and rewrite for each time $t$ the quadratic problem over the \textit{rolling} period $t, t+\delta t, ..., \tau^K_{RHC}$ where $\tau_{RHC}^K = \min (t+ K_{RHC} \delta t, T-\delta t)$.
\begin{eqnarray*}
(\bar{u}_{t},\bar{u}_{t+\delta t}...,\bar{u}_{\tau^K_{RHC}}) = \text{argmin}_{u_t \geq 0,...,u_{\tau^K_{RHC}} \geq 0}  C_t(u_t, u_{t+\delta t}, ..., u_{\tau^K_{RHC}}).
\end{eqnarray*}
$K_{RHC}$ can be then calibrated to find a balance between computational feasibility and prediction power. This method is easily implemented and can tremendously reduce the complexity of the problem, although it makes assessing the quality of the predicted value $\bar{u}_t$ difficult in practice. In particular, note that any optimization at time $t < T - (K_{RHC} + 1)\delta t$ will ignore the terminal completion constraint $\tilde{\beta}_T q_T^2$, which may be potentially harmful for the shape of the trading curve. This can be partially solved by taking inventory constraints $\beta_t$ that are strongly increasing in $t$.   
\item As suggested in, e.g \cite{alessio2009survey}, it is also possible to run an \textit{offline} optimization procedure in a variety of cases and tabulate the results. The authors show that the above quadratic program always yields solutions that are affine in the state process as in the unconstrained LQR case, but where now the coefficients in the affine representation depend on whether the state process lies in particular critical hyperplanes or not. Ideally, the authors suggest that the hyperplanes equations and the associated affine coefficients be computed and stored prior to the beginning of the execution. However, this method is challenging in its own way because the number of such critical regions grows exponentially with the number of constraints (i.e, in our case, buckets). A fast, heuristic implementation is proposed in Section 3.1 of \cite{alessio2009survey}.       
\end{itemize}

\section{Calibrating $\kappa_t$} \label{SectionKappa}

Although stock mid-prices may present weak to moderate mean-reversion
patterns locally in time, the intensity of such phenomena is in general
unstable and difficult to capture from a statistical point of view. This
is further complicated when using high frequency historical data
contaminated by the so-called microstructure noise. Accordingly, in what
follows, we propose to see \(\kappa_t\) as a tuning parameter for
controlling the deviation of the trading curve from its Almgren-Chriss
curve counterpart. Put differently, \(\kappa_t\) can be calibrated so
that our algorithm reaches a given level of aggressiveness when it is in
the money.

Let us consider the unconstrained LQR case for the sake of simplicity,
with a mean-reversion signal that is fixed in time
\(\kappa_t = \kappa\). Recall that, by definition, when \(\kappa\) is
set to \(0\) in the optimization procedure, the resulting trading curve
corresponds to the deterministic strategy of \cite{almgren2001optimal}.
When \(\kappa >0\), we allow the algorithm to deviate and wander around
this curve with an amplitude that increases with \(\kappa\). The
mean-reversion signal can thus be calibrated by setting a desired
deviation level and taking the corresponding \(\kappa\). For a given
order execution, consider \(u_{LQR,t}\) and \(u_{AG,t}\) the trading
curves respectively obtained from the LQR method with a given
\(\kappa > 0\) and the one obtained when setting \(\kappa = 0\). Let us
set \(q_a\) the \(a\)-quantile of
\(\{ u_{LQR,t}-u_{AG,t}, t \in [0,T-\delta t]\}\), and set
\[ \delta_a = \frac{q_a + q_{1-a}}{2}.\] We find empirically that, for
reasonable ranges of parameters, \(\delta_a\) is mostly explained by the
log linear relation
\[ \log \delta_a = b_0 + b_1\log \kappa + b_2 \log \nu + b_3 \log \frac{Q_0}{M}\]
where \(M = \sum_{\tau = 0}^{T-\delta t}{v_\tau}\) is the expect market
volume over the execution period, and \(\nu\) is a market impact
parameter such that \(\mu_t\) and \(\eta_t\) are taken of the form
\(\mu_t = \mu_{0,t} \nu\) and \(\eta_t = \eta_{0,t} \nu\). By setting
\(\delta_a\) to a desired deviation (i.e aggressiveness) level, one can
invert the above relation and get
\[\log \kappa = \frac{1}{b_1}\log \delta_a - \frac{b_0}{b_1} - \frac{b_2}{b_1} \log \nu  -\frac{b_3}{b_1} \log \frac{Q_0}{M}.\]
The coefficients \(b_0,...,b_3\) can be estimated on a mix of simulated
and historical data. Then, selecting a small value for \(\delta_a\) will
yield a conservative execution style close to the Almgren-Chriss
framework, whereas taking larger values will let the trading curve
modulate its velocity following the price slippage \(p_t\). In any case,
the presence of \(\delta_a\) ensures that with high probability, the
trading curve excursion will remain bounded within a compact interval,
to avoid uncontrolled spikes in aggressiveness.

\section{The continuous limit} \label{SectionContinuous}

In this section, we return to the LQR framework and we document the
optimization problem from Section \ref{sectionUnconstrainedLQR} in a
continuous time setting. We show that the expression of the optimal
strategy can be substantially simplified in the continuous trading limit
(Theorem \ref{thmLQRContinuous}), and even allows us to derive a closed
formula in the particular case of no permanent impact (\(\mu_t=0\)) and
constant parameters (Theorem \ref{thmFormula} and Corollary
\ref{CorFormula}).

There are two ways to obtain the resulting optimal strategy and optimal
value function in this framework. They can be informally guessed by
taking all the expressions in Theorem \ref{thmLQR} and then taking the
limit \(\delta t \to 0\) with \(T\) fixed. Alternatively, we can also
explicitly derive the Hamilton-Jacobi-Bellman (HJB) equation associated
to the quadratic problem, and apply a verification theorem (see, e.g
Theorem 5.2, \cite{cartea2015algorithmic}) to our candidate value
function given below. We adopt the second approach in the remaining of
this section.

We consider now the continuous execution problem on the time interval
\([0,T]\), where \(x_t\) is the trading speed in shares per unit of
time, \(v_t\) is the local market volume. In line with the discrete
problem, we associate to them the quantities \(u_t = x_t/v_t\) and
\(q_t = 1 - Q_0^{-1}\int_0^t x_s ds\). The price slippage
\(p_t = (S_t- \mathfrak S_t)/S_0\) used in the optimization now follows
the mean-reverting model
\[dp_t = -\kappa_t p_t dt + \mu_t x_t dt + \alpha_tdt+ \sigma_t dW_t\]
where \(W\) is a standard Brownian motion. The joint process
\(X_t = (p_t,q_t)\) thus follows the dynamics
\begin{align} \label{dynaXContinuous}
dX_t = -\kappa_t \mathcal J X_t dt + a_t u_t w_t dt+  \alpha_t e_1 dt +\sigma_t e_1dW_t
\end{align} where \(a_t = v_t/Q_0\),
\(w_t= \left( Q_0 \mu_t, - 1 \right)^T\) and with \(X_0 = (0, 1)^T\).
The marginal cost functions \(j_t(u_t,X_t)\) remain unchanged and the
global cost at time \(t\) starting from state \(X = (p,q)\) now takes
the form
\[ H(t,X) = \inf_{u \in \mathcal A_{t,T}} \mathbb{E} \left[\left.\int_{t}^{T} j_\tau(X_s,u_s) d s + J_T(X_T) \right.| X_t=X \right],\]
where \(\mathcal A_{t,T}\) is the set of predictable strategies. Given
(\ref{dynaXContinuous}), we readily derive the associated HJB equation
(see e.g (5.19) in \cite{cartea2015algorithmic})
\begin{align}\label{eqHJB}
\partial_t H(t,X) + \inf_{u \in \mathcal A_{t,T}} \left\{ \left[-\kappa_t \mathcal J X + a_t u_t w_t + \alpha_t e_1 \right]^T\partial_X H(t,X) + \frac{\sigma_t^2}{2} \partial_{p}^2 H(t,X) +  j_t(u_t,X) \right\} = 0,
\end{align} and the terminal condition
\[ H(T,X) = J_T(X) = \tilde{\beta}_T q^2.\] We are now ready to state
the main result of this section.

\begin{theorem} \label{thmLQRContinuous}
Let $X = (p,q)^T$, $\mathcal I = \text{diag}(1,1)$, $\mathcal{J} = \text{diag}(1,0)$, and $e_1 = (1,0)^T$. Assume that the backward Riccati differential equation (\ref{eqdiffP}) below admits a unique solution. Then, the value function $H$ is quadratic in $X$, of the form $H(t,X) = X^T P_t X + b_t^T X + e_t$. Moreover, the optimal control $u_t^*$ is of the form 
$$ u_t^* = k_t^T X_t + f_t $$
with 
$$k_t = -\frac{1}{\eta_t}\left(P_{t}^Tw_t + \frac{1}{2}e_1\right),$$
and
$$ f_t = -\frac{b_t^Tw_t + s_t}{2\eta_t}.$$

Moreover, we have the backward differential equations 
\begin{align} \label{eqdiffP}
- \partial_tP_t = - \kappa_t (\mathcal J P_{t} + P_{t} \mathcal J) + \Delta_t  - a_t\eta_tk_t k_t^T, 
\end{align}
\begin{align} 
-\partial_t b_t = - 2a_t\eta_tf_tk_t - \kappa_t \mathcal Jb_{t} + 2\alpha_tP_{t}e_1,
\end{align}
\begin{align} 
-\partial_t e_t =   \alpha_t b_t^T e_1 + \sigma_t^2 e_1^T P_{t}e_1 - a_t \eta_t f_t^2  
\end{align}
where $\Delta_t = \left( \begin{matrix}  0 &0  \\ 0 & \beta_t \end{matrix} \right)$, and with the terminal conditions $ P_T = \left( \begin{matrix} 0 &0  \\ 0 & \tilde{\beta}_T \end{matrix} \right)$, $b_T = (0,0)^T$, $e_T=0$. 
\end{theorem}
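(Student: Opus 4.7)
The plan is to verify directly that the quadratic ansatz $H(t,X) = X^T P_t X + b_t^T X + e_t$ (with $P_t$ symmetric) and the feedback $u_t^* = k_t^T X_t + f_t$ jointly solve the HJB equation (\ref{eqHJB}) with the terminal condition $H(T,X) = \tilde{\beta}_T q^2$, and then to conclude via the verification theorem cited in the statement (Theorem 5.2 of \cite{cartea2015algorithmic}) that $H$ coincides with the value function and $u^*$ is optimal.

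I would begin by computing $\partial_t H = X^T \dot{P}_t X + \dot{b}_t^T X + \dot{e}_t$, $\partial_X H = 2 P_t X + b_t$ (using the symmetry of $P_t$), and $\partial_p^2 H = 2 e_1^T P_t e_1$. The bracket in (\ref{eqHJB}) is then a strictly convex quadratic in $u$, with $u^2$-coefficient $a_t \eta_t > 0$; the first-order condition
\[
2 a_t \eta_t u + a_t w_t^T(2 P_t X + b_t) + a_t(p + s_t) = 0,
\]
combined with $p = e_1^T X$, rearranges to exactly $u^* = k_t^T X + f_t$ with $k_t$ and $f_t$ as in the statement.

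Substituting $u^*$ back, the three control-dependent contributions collapse to $-a_t \eta_t (u^*)^2$. Expanding $(u^*)^2 = (k_t^T X)^2 + 2 f_t (k_t^T X) + f_t^2$ and symmetrizing the drift contribution via $-2\kappa_t X^T \mathcal{J} P_t X = -\kappa_t X^T(\mathcal{J} P_t + P_t \mathcal{J}) X$ (valid because $P_t$ is symmetric), the HJB equation reduces to a polynomial identity in $X$. Matching the quadratic, linear, and constant parts yields respectively the Riccati equation (\ref{eqdiffP}) for $P_t$, the linear ODE for $b_t$, and the scalar ODE for $e_t$, and the terminal conditions follow from reading off $H(T,X) = \tilde{\beta}_T q^2 = X^T \text{diag}(0,\tilde{\beta}_T) X$.

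The main obstacle is the verification step itself. Under the assumed existence and uniqueness of the Riccati solution on $[0,T]$, the feedback coefficients $k_t$ and $f_t$ are continuous and hence bounded, so substituting $u^*$ into (\ref{dynaXContinuous}) gives a linear SDE with bounded coefficients that admits a unique strong solution with finite moments of all orders; in particular $u^*$ is admissible. The delicate point is to justify applying It\^o's formula to $H(t,X_t)$ for an arbitrary admissible control $u$ with finite expected cost and then taking expectation: since $H$ has quadratic growth in $X$, one needs $\mathbb{E}\bigl[\sup_{s \le T} |X_s|^2\bigr] < \infty$ on the set of competitors, which can be obtained by a standard localization plus Gronwall argument, using the urgency penalty $\beta_t q^2$ and the terminal cost $\tilde{\beta}_T q_T^2$ to control $q$, and the linear structure of (\ref{dynaXContinuous}) to control $p$. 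Once this integrability is in hand, the cited verification theorem delivers the claim.
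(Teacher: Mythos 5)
Your proposal is correct and follows essentially the same route as the paper: substitute the quadratic ansatz into the HJB equation (\ref{eqHJB}), minimize the strictly convex quadratic in $u$ to obtain $u^*=k_t^TX+f_t$, match the quadratic, linear, and constant parts in $X$ to get the three backward ODEs and terminal conditions, and invoke the verification theorem of \cite{cartea2015algorithmic}. Your additional care about admissibility and the integrability needed to apply It\^o's formula goes beyond the paper's one-line appeal to that theorem, but it is consistent with, and a reasonable tightening of, the published argument.
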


It is interesting to note that, as is often the case, the continuous
formulation somewhat simplifies the form of several terms involved in
the expression of the optimal strategy (only dominating terms in
\(\delta t\) do not vanish in the continuous trading limit). This
phenomenon is similar to what happens with the effect of linear
permanent impact on the optimization which disappears in the continuous
trading limit in the Almgren-Chriss framework as explained in, e.g
\cite{gueant2016financial}, p.61. However, due to the presence of
\(\kappa_t\) in our case, in general the optimal strategy obtained in
Theorem \ref{thmLQRContinuous} does depend on the permanent impact in
the continuous limit.

Another important feature of the above strategy is that it yields an
optimal solution whose expression depends on the matrix term \(P_t\)
which is the solution to the backward multi-dimensional Riccati equation
(\ref{eqdiffP}). It can be explicitly solved when \(\kappa_t = 0\),
since in that case the problem reduces to the canonical framework of
\cite{almgren2001optimal}, although in its continuous form, as exposed
in \cite{gueant2016financial}, pp.52-55. In general, however, if
\(\kappa_t \neq 0\), deriving a closed expression for \(P_t\) and
therefore for \(u_t^*\) seems challenging. From a practical point of
view, \(P_t\) can always be calculated using a backward Euler scheme or
any other first order numerical technique. There is one case, however,
which allows us to derive an explicit formula even in the presence of a
mean-reversion parameter, which we document in the next theorem.

\begin{theorem} \label{thmFormula}
Assume that there is no permanent impact $\mu_t=0$, and for simplicity that $s_t=0$, $\alpha_t=0$, and that moreover $\eta_t = \eta$, $a_t=a$, $\kappa_t=\kappa$, $\beta_t = \beta$ are all constant in time. Then the optimal strategy can be written as
$$ u_t^* = k_{t,1} p_t + k_{t,2} q_t$$
where
$$ k_{t,1} = - \frac{1+\zeta + \kappa \chi(t)}{2\eta\left(e^{-\sqrt{\frac{a\beta}{\eta}}(T-t)} +  \zeta e^{\sqrt{\frac{a\beta}{\eta}}(T-t)} \right)},$$
$$ k_{t,2} = \sqrt{\frac{\beta}{a\eta}}\frac{\zeta e^{2\sqrt{\frac{a\beta}{\eta}}(T-t)} -1}{\zeta e^{2\sqrt{\frac{a\beta}{\eta}}(T-t)} +1},$$
$$ \zeta = \frac{ \sqrt{\frac{\eta \beta}{a}} + \tilde{\beta}_T }{ \sqrt{\frac{\eta \beta}{a}} - \tilde{\beta}_T},$$
$$ \chi(t) = \frac{1}{\kappa - \sqrt{\frac{a\beta}{\eta}}} \left( e^{\left(\kappa - \sqrt{\frac{a\beta}{\eta}} \right)(T-t)} -1\right) + \frac{\zeta}{\kappa + \sqrt{\frac{a\beta}{\eta}}} \left( e^{\left(\kappa + \sqrt{\frac{a\beta}{\eta}} \right)(T-t)} -1\right).$$
\end{theorem}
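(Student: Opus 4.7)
The plan is to invoke Theorem \ref{thmLQRContinuous} and then solve the resulting Riccati system explicitly under the simplifying assumptions. First, since $\mu_t=0$, we have $w_t = (0,-1)^T$, which gives by direct computation $k_t = \eta^{-1}(P_t^T w_t + \tfrac12 e_1) = \eta^{-1}\bigl(p_{12}-\tfrac12,\, p_{22}\bigr)^T$, where $p_{12}$ and $p_{22}$ denote the $(1,2)$ and $(2,2)$ entries of the (symmetric) matrix $P_t$. The assumptions $\alpha_t=0$, $s_t=0$, together with the terminal condition $b_T=0$, force the $b_t$ equation in Theorem \ref{thmLQRContinuous} to be homogeneous and linear, so $b_t\equiv 0$ and hence $f_t\equiv 0$. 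Consequently $u_t^* = k_{t,1} p_t + k_{t,2} q_t$, and it remains to identify $p_{22}$ and $p_{12}$.

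Expanding the matrix Riccati equation componentwise reveals that $p_{22}$ decouples from the other entries and satisfies the scalar autonomous Riccati $\partial_t p_{22} = (a/\eta)\, p_{22}^2 - \beta$ with terminal value $p_{22}(T)=\tilde\beta_T$. I would solve it by separation of variables and partial fractions on $1/(p_{22}^2 - \eta\beta/a)$; setting $\omega = \sqrt{a\beta/\eta}$ and $\lambda = \sqrt{\eta\beta/a}$, one obtains the $\tanh$-type form
$$p_{22}(t) = \lambda\,\frac{\zeta e^{2\omega(T-t)} - 1}{\zeta e^{2\omega(T-t)} + 1},$$
with $\zeta = (\lambda+\tilde\beta_T)/(\lambda-\tilde\beta_T)$ emerging from matching the terminal value. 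Dividing by $\eta$ yields the claimed expression for $k_{t,2}$.

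For $k_{t,1}$, I would introduce the shifted variable $\tilde p_{12} = p_{12} - \tfrac12$. A straightforward manipulation of the $(1,2)$ component of the Riccati equation shows that $\tilde p_{12}$ satisfies the linear ODE
$$\partial_t \tilde p_{12} = \bigl(\kappa + (a/\eta)\, p_{22}\bigr)\tilde p_{12} + \tfrac{\kappa}{2},\qquad \tilde p_{12}(T) = -\tfrac12.$$
Variation of parameters applies: using the explicit form of $p_{22}$, one computes the integrating factor $\exp\bigl(-\int_t^T (\kappa + (a/\eta)p_{22})\,dr\bigr)$ by first noting that $\int_t^T (a/\eta)\, p_{22}\, dr$ reduces to a logarithmic expression via another partial-fraction integration. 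Substituting this back and computing the particular integral $\int_t^T D(s)\,e^{\kappa(T-s)}\,ds$, where $D(s) = e^{-\omega(T-s)} + \zeta e^{\omega(T-s)}$, naturally produces the function $\chi(t)$: the two exponentials in $D(s)\,e^{\kappa(T-s)}$ give precisely the two terms $[\kappa\mp\omega]^{-1}(e^{(\kappa\mp\omega)(T-t)}-1)$. Assembling these pieces and dividing by $\eta$ delivers the stated formula for $k_{t,1}$.

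The main technical obstacle is the bookkeeping in the $p_{12}$ step: one must carry out two separate partial-fraction integrations (once for the Riccati in $p_{22}$, once for the integrating factor of the linear ODE) and then repeatedly simplify expressions such as $\zeta e^{2\omega(T-s)} + 1 = e^{\omega(T-s)}\bigl(\zeta e^{\omega(T-s)} + e^{-\omega(T-s)}\bigr)$ to bring the final answer into the compact hyperbolic form involving $D(t)$. A sanity check I would perform throughout is setting $\kappa=0$: in that case $\chi$ drops out, $\tilde p_{12}$ satisfies a purely homogeneous equation, and the formula collapses to the closed-form solution consistent with the continuous Almgren--Chriss limit, providing confidence in the derivation.
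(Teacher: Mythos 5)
Your proposal follows essentially the same route as the paper's proof: decouple the $(2,2)$ entry of the Riccati equation and solve it by separation of variables to get the $\tanh$-type form for $p_{22}$, shift $p_{12}$ by $\tfrac12$ to obtain the linear ODE $\partial_t \tilde p_{12} = (\kappa + (a/\eta)p_{22})\tilde p_{12} + \tfrac{\kappa}{2}$ solved by variation of constants with $\chi$ emerging from the integral of the reciprocal homogeneous solution, and observe that $b_t\equiv 0$ forces $f_t=0$. The only slip is a dropped minus sign in your intermediate formula $k_t=\eta^{-1}(P_t^Tw_t+\tfrac12 e_1)$, which should read $k_t=-\eta^{-1}(P_t^Tw_t+\tfrac12 e_1)$; your componentwise expression $\eta^{-1}(p_{12}-\tfrac12,\,p_{22})^T$ is already the correct one.
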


We now give several comments about the above results.

\begin{itemize}
\tightlist
\item
  The formula for \(k_{2,t}\) is independent from \(\kappa\), and
  corresponds to what is obtained in the literature in the absence of
  mean-reversion, see for instance Section 6.5 in
  \cite{cartea2015algorithmic}.
\item
  The coefficient in front of the price slippage, \(k_{1,t}\) is
  affected by \(\kappa\) through the term \(\kappa\chi(t)\). A
  straightforward analysis shows that under the hard completion
  constraint \(\tilde{\beta}_T \to +\infty\), that is \(\zeta = -1\),
  \(\chi\) is a non-positive increasing function with terminal condition
  \(\chi(T) = 0\). From this one can conclude two things. First, because
  \(\chi\) is non-positive, we see that \(\kappa\) has a negative impact
  on the relationship between the optimal participation rate and the
  price slippage, which confirms the fact that the algorithm is
  aggressive in the money from a theoretical perspective. Second, since
  \(\chi\) decreases in absolute value over time to finally vanish at
  time \(T\), the effect of mean-reversion on the curve is the strongest
  at the beginning of the order and then continuously decreases as time
  passes. This is in line with the intuition that as we approach \(T\),
  there is no time left to benefit from the mean-reversion effect while
  the completion constraint becomes predominant.
\end{itemize}

We finally specify two important particular cases in the next corollary.

\newtheorem{corollary}{Corollary} 
\begin{corollary}\label{CorFormula}
In the absence of inventory costs ($\beta = 0$), the above expressions are degenerated and yield
$$ k_{1,t} = - \frac{ e^{\kappa(T-t)}+ \frac{a\tilde{\beta}_T}{\eta} \left((T-t)e^{\kappa(T-t) }- \frac{1}{\kappa}\left(e^{\kappa(T-t)}-1\right)\right)}{2\eta + 2\tilde{\beta}_Ta(T-t)}$$
$$ k_{t,2} = \frac{\tilde{\beta}_T}{\eta+a\tilde{\beta}_T(T-t)}.$$

Under the hard constraint $\tilde{\beta}_T \to +\infty$ and under no inventory costs $\beta =0$, we obtain on $[0,T)$
$$ k_{t,1} = -\frac{e^{\kappa(T-t) }- \frac{1}{\kappa(T-t)}\left(e^{\kappa(T-t)}-1\right)}{2\eta},$$
$$ k_{t,2} = \frac{1}{a(T-t)},$$
and $k_{T,1}= -\frac{1}{2\eta}$,$k_{T,2} = +\infty$. If moreover the mean-reversion signal is weak, then $\kappa(T-t) \ll 1 $ for $t \neq T$ and we have 
$$ k_{t,1} \sim -\frac{\kappa}{2\eta}(T-t).$$
\end{corollary}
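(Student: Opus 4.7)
The plan is to derive each formula in the corollary as an appropriate limit of the closed-form expressions from Theorem \ref{thmFormula}. In all three cases ($\beta\to 0$; subsequently $\tilde{\beta}_T\to+\infty$; and finally $\kappa(T-t)\ll 1$), the naive substitution produces an indeterminate $0/0$ or $\infty/\infty$ form, so the key is to perform controlled Taylor expansions in the vanishing parameter and to track enough orders for the nontrivial limit to emerge.

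For the first case, I would introduce the small parameter $\omega=\sqrt{a\beta/\eta}$ and rewrite $\sqrt{\eta\beta/a}=\eta\omega/a$ and $\sqrt{\beta/(a\eta)}=\omega/a$. Expanding $\zeta$ in $\omega$ yields $\zeta=-1-2\eta\omega/(a\tilde{\beta}_T)+O(\omega^2)$, hence $1+\zeta=O(\omega)$. For $k_{t,2}$, expanding $\zeta e^{2\omega(T-t)}\pm 1$ to first order in $\omega$ shows that the numerator of the fraction tends to $-2$ while the denominator behaves like $-2\omega[(T-t)+\eta/(a\tilde{\beta}_T)]+O(\omega^2)$; the prefactor $\omega/a$ absorbs the remaining $\omega$, and a direct simplification gives the claimed rational function in $\tilde{\beta}_T$. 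For $k_{t,1}$, the denominator $e^{-\omega(T-t)}+\zeta e^{\omega(T-t)}$ is likewise $O(\omega)$ after a first-order expansion. The more delicate step is the expansion of $\chi(t)$: each of its two summands contributes a leading constant $\pm\kappa^{-1}(e^{\kappa(T-t)}-1)$ that cancels exactly, so one must push the expansions of $(\kappa\mp\omega)^{-1}$ and of $e^{(\kappa\mp\omega)(T-t)}$ one order further, and combine them with the $O(\omega)$ correction to $\zeta$ that multiplies the second summand. Once $1+\zeta+\kappa\chi(t)$ is known to $O(\omega)$, the $\omega$ factors in numerator and denominator of $k_{t,1}$ cancel and algebraic simplification produces the stated formula.

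The second case then follows by dividing numerator and denominator of the first-case expressions by $\tilde{\beta}_T$ and passing to the limit $\tilde{\beta}_T\to+\infty$ for $t<T$; the terminal values $k_{T,1}=-1/(2\eta)$ and $k_{T,2}=+\infty$ are read off by direct evaluation at $t=T$, using that the bracket $(T-t)e^{\kappa(T-t)}-\kappa^{-1}(e^{\kappa(T-t)}-1)$ vanishes at $t=T$. The third case is obtained by Taylor expanding $e^{x}$ with $x=\kappa(T-t)$ and computing $e^{x}-x^{-1}(e^{x}-1)=x/2+O(x^2)$, which divided by $2\eta$ gives the claimed leading-order linear decay in $(T-t)$.

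The main obstacle is the $O(\omega)$ expansion of $\chi(t)$ in case (i): both of its summands are themselves of order one, and their dominant parts cancel identically, so first-order bookkeeping in $\omega$ must be carried out consistently through the three ingredients $(\kappa\mp\omega)^{-1}$, $e^{(\kappa\mp\omega)(T-t)}-1$, and the linear correction to $\zeta$. Omitting any one of these contributions yields a wrong coefficient in $k_{t,1}$, so even though only a first-order limit is ultimately taken, the intermediate algebra requires careful second-order accounting before the final cancellation of $\omega$ factors in numerator and denominator.
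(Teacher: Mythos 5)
Your approach coincides with the paper's, whose entire proof of this corollary is the single remark that it "is simply obtained by first taking the pointwise limit $\beta \to 0$ in the above expressions and then by taking $\tilde{\beta}_T \to +\infty$"; your detailed bookkeeping in $\omega=\sqrt{a\beta/\eta}$ (the $O(\omega)$ correction to $\zeta$, the cancellation of the leading terms of the two summands of $\chi$, and the final $\tilde{\beta}_T\to+\infty$ and $\kappa(T-t)\ll 1$ limits) correctly fills in exactly the computation the paper leaves implicit. One caveat: your (correct) expansion $e^{x}-x^{-1}(e^{x}-1)=x/2+O(x^2)$ with $x=\kappa(T-t)$, divided by $2\eta$, gives $k_{t,1}\sim -\kappa(T-t)/(4\eta)$ rather than the $-\kappa(T-t)/(2\eta)$ printed in the corollary, so the stated final asymptotic appears to carry a spurious factor of $2$ and you should not assert that your expansion reproduces it verbatim.
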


Figure 4 shows the shape of \(k_{t,1}\) when there are no inventory
costs and under the hard constraint of forced completion. We see that,
as expected, \(k_{1,t}\) decreases in absolute value as \(t\) increases.

\begin{figure}
\centering
\includegraphics{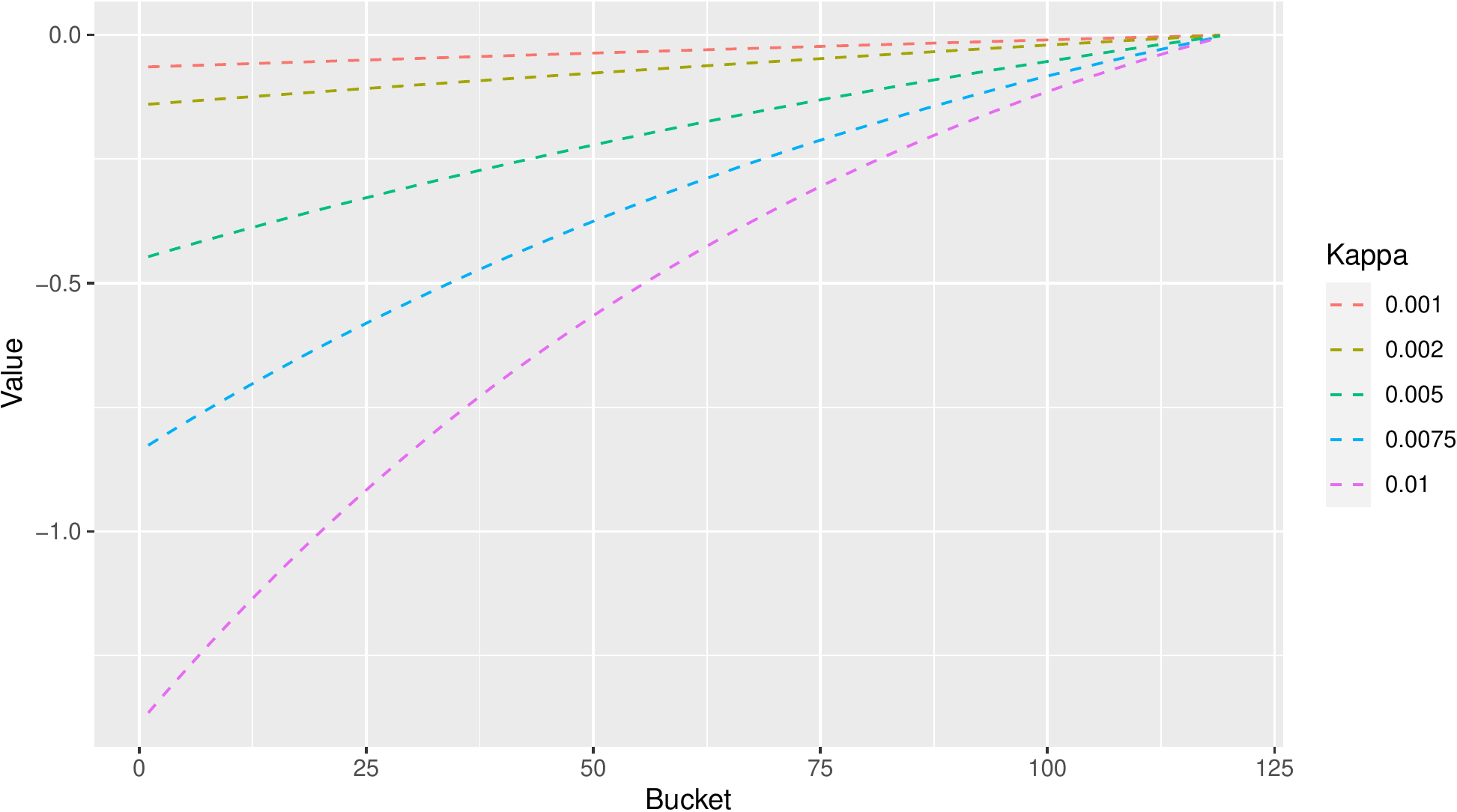}
\caption{price coefficient for different values of kappa}
\end{figure}

\section{LQR with an additional dark pool}

We finally examine the case where shares can be traded in a second
exchange called \textit{dark pool}. The liquidation problem becomes more
complex because both platforms offer concurrent prices and market impact
rules. This calls for an additional level of optimization when deciding
what fractions of shares should be sent to the lit (i.e ordinary) market
and the dark pool. We first describe the underlying mechanisms driving
the trading process in an idealized dark pool in the next paragraph,
following mainly the model of \citep{cartea2015algorithmic}, Section
7.4, with slight modifications. We then bring forward a two-step
strategy which simultaneously posts orders in both markets.

\subsection{Model}

Dark pools are special trading venues where bid and ask quotes are not
revealed to their clients, and where trades can only occur at the
mid-price in the corresponding lit market. An important consequence of
this rule is that, by posting an order during the bucket
\([t, t+\delta t)\), one can never be sure whether the order will
actually be executed. On the other hand, the price execution will be, in
theory, the mid-price \(S_t\), which is more favorable than what is
offered on the lit market at the same time by a quantity
\(S_0(\eta_t u_t + s_t)\). In practice, however, having orders placed in
the dark pool does send a signal to the other market participants since
they can be uncovered by probing the available liquidity (e.g by sending
small orders to the dark pool and checking whether they trigger a
trade). This, in turn, has an impact on the mid-price due to adverse
selection. In any case, dark pools can be seen as venues that offer
better price slippages than lit markets, at the cost of higher variance
due to the uncertain nature of execution.

Let \(y_t\) be the amount of shares the algorithm has posted at some
point and which are still pending for execution in the dark pool during
the bucket \([t,t+\delta t)\). In contrast with the lit market, \(y_t\)
is expressed in absolute shares because the market volume in the dark
pool is never observed. Note also that, for instance, if
\(y_t = y_{t+\delta t}\) and no quantity was executed during
\([t, t + \delta t)\), then no order should be sent to the dark pool at
time \(t + \delta t\) since the quantity posted at \(t\) is still
pending for execution. We assume that over the bucket
\([t, t+\delta t)\)

\begin{itemize}
\tightlist
\item
  the pending quantity \(y_t\) is executed \emph{in full} with a small
  probability \(\lambda_{t}^D \delta t\). Realistic values for
  \(\lambda_{t}^D \delta t\) with 1 to 5-minute buckets are around
  1-10\%. We write \(\epsilon_t^D\) the associated variable, where
  \(\epsilon_t^D = 1\) means that \(y_t\) is executed whereas
  \(\epsilon_t^D=0\) indicates that nothing happened. In the continuous
  limit, this is equivalent to assuming that orders placed in the dark
  pool are matched at random times following an inhomogeneous Poisson
  process of intensity \(\lambda_t^D\).
\item
  The adverse selection effect described above amounts to setting the
  paid-price to
  \[ \tilde{p}_t = p_t + s_t + \eta_t u_t + \theta_t^D y_t\] for orders
  in the lit market and
  \[ \tilde{p}_t^D = p_t  + \eta_t u_t + \theta_t^D y_t\] in the dark
  pool, where \(\theta_t^D\) is an impact parameter. In realistic cases,
  \(\theta_t^D\) should be taken so that \(\theta_t^D v_t\) is
  significantly smaller than \(\eta_t\), since the absence of direct
  quotes information on dark pools tends to mitigate the impact orders
  have when posted.
\end{itemize}

The first point stresses the radical difference between lit and dark
markets. While execution in the lit is done continuously by trading
small amounts during each bucket, execution in the dark yields long
periods with no execution at all separated by sudden spikes of large
executions. In continuous time, they make the execution profile jump by
a significant fraction a few times during an execution (see Figure 5).

\subsection{Two-step optimization}

We look for bivariate strategies \((u_t^*, y_t^*)\) that adequately
balance the fraction of shares sent to both venues over time. Since the
impact term \(\theta_t^D y_t\) and the executed quantity in dark
\(\epsilon_t^D y_t\) are linear, it is possible to solve a multivariate
LQR to obtain an explicit optimal strategy. Unfortunately, our
experience with this approach has shown that the related solution is
numerically unstable, yielding spikes of large quantities with opposite
signs posted simultaneously in the lit and the dark markets, even when
\(\theta_t^D, \eta_t, \mu_t\) and \(\lambda_t^D\) are calibrated so that
no arbitrage is permitted between both venues. What is more,
constraining the strategies by MPC is numerically highly inefficient due
to the presence of the Poisson process in the optimization problem. In
particular, the MPC-based strategy is already strongly biased in the
absence of constraints.

Accordingly, we set aside the global problem in this paragraph, and give
an approximating two-step strategy built on the previous sections. The
process goes as follows. For each bucket \([t, t+ \delta t)\), we run
the MPC scheduler on the lit market alone, and get a participation rate
\(u_t^*\). We then choose a fraction \(\mathfrak{a} \in [0,1]\) of the
remaining quantity \(q_t - a_t u_t^*\) to execute by \(T\) that
corresponds to the maximal quantity that can be allocated to the dark
pool during the current bucket. Given this quantity and the current
price slippage, we calculate an optimal allocation \(y_t^*\) in the LQR
sense, following the rules of the previous paragraph. We now give the
mathematical details behind the second step.

Assume that the procedure starts at time \(0 \leq t \leq T\). Let us
define \(q_t^D = \mathfrak{a}(q_t - a_t u_t^*)\), the scaled quantity
passed to the dark LQR optimizer. The price slippage is, as for the lit
market, simply \(p_t\), and in a similar fashion as before, we let
\(X_t^D\) be the state process \(\left(p_t,q_t^D\right)\). From the dark
LQR perspective, everything is as if \(q_t^D\) was the remaining
quantity to execute by the dark only by \(T\). Next, we define the price
paid in the dark venue as
\[ \tilde{p}_s^D = p_s + \eta_s \langle u\rangle_t + \theta_s^D y_s\]

for any \(t \leq s \leq T\). The average participation rate over the
period \(t, t+ \delta t,...,s,...T\) in the lit market
\(\langle u \rangle_t\) can be taken as
\((1-\mathfrak a)Q_t/\left(\sum_{\tau=t}^T v_\tau \delta t\right)\).
Note that it is not possible to substitute to this value the actual
participation \(u_s^*\) since it is not known at this point as soon as
\(s > t\). The term \(\eta_s \langle u \rangle_t\) therefore accounts
for the impact coming from the lit market and cannot be controlled. The
associated stage cost is \begin{align}  \label{eqStageCostDark}
j_s^D(X_s^D,y_s)\delta t = \left\{ (\delta t)^{-1}\frac{a_s}{v_s}\theta_s^D \epsilon_s^D  y_s^2 + (\delta t)^{-1}\frac{a_s}{v_s}p_s \epsilon_s^D y_s   + \left( \theta_s^D + (\delta t)^{-1}\frac{\eta_s}{v_s} \epsilon_s^D \right)  a_s\langle u \rangle_t y_s +\beta_s^D \left(q_s^D\right)^2 \right\} \delta t
\end{align} where the term \(\theta_s^D a_s\langle u \rangle_t y_s\)
represents the supplementary cost paid on the lit market due to the fact
that posting orders in the dark pool pushes the mid-price. The term
\((\delta t)^{-1}\frac{\eta_s}{v_s} \epsilon_s^D a_s\langle u \rangle_t y_s\),
on the other hand, accounts for the impact from the lit market on the
dark quantity, paid when there is an execution in the dark. The terminal
cost is simply \begin{align}  \label{eqTerminalCostDark}
j_T^D \left(X_T^D\right) = \tilde{\beta}_T \left(q_T^D\right)^2. 
\end{align} In general, in contrast with the lit market, it is not
necessary to take a high value for \(\tilde{\beta}_T^D\) since the lit
MPC will complete the order no matter what happens if
\(\tilde{\beta}_T = + \infty\). The state space equation becomes, in
matrix form \begin{align} \label{eqState2Dark}
X_{s+\delta t}^D = \left( \begin{matrix}  1- \kappa_s \delta t &0  \\ 0 &1 \end{matrix} \right)  X_s^D - \frac{a_s}{v_s} \epsilon_s^D e_2 y_s +  \left( \begin{matrix}  \alpha_s \delta t+\sigma_{t, \delta t} \xi_s \\ 0 \end{matrix} \right).
\end{align}

We finally define the score function at point \(s\) as
\[ H^D(s,X^D) = \inf_{(y_k)_{k = s,...,T-\delta t}} \mathbb{E} \left[\left.\sum_{\tau=s}^{T-\delta t} j_\tau^D(X_\tau^D,y_\tau)\delta t + J_T^D(X_T^D) \right| X_t^D=X^D \right].\]

\begin{theorem} \label{thmLQRDark} (Unconstrained LQR with pure dark pool execution)
Let $X^D = (p,q^D)^T$, $\mathcal I = \text{diag}(1,1)$, $\mathcal{J} = \text{diag}(1,0)$, and $e_1 = (1,0)^T$. The value function $H^D$ starting from $t$  with $0 \leq t \leq T$, is quadratic in $X^D$, of the form 
$$H^D(s,X^D) = (X^D)^T P_s^D X^D+ (b_s^D)^T X^D + e_s^D$$
for any $t \leq s \leq T$. The optimal control $y_s^*$ is of the form 
\begin{align}  \label{eqUDark}
y_s^* = k_s^D X^D + f_s^D 
\end{align}
with 

$$ g_s^D =  \theta_s^D   + \frac{ a_s}{v_s} e_2^T P_{s + \delta t}^D e_2  $$

$$(k_s^D)^T = -  (g_s^D)^{-1}    \left[ -e_2^T (P_{s + \delta t}^D)^T (\mathcal I -\kappa_s \delta t \mathcal J) + \frac{1}{2} e_1^T \right]   $$
and
$$ f_s^D = -\frac{1}{2} (g_s^D)^{-1} \left[-e_2^Tb_{s + \delta t}^D- 2\alpha_s \delta t e_1^TP_{s + \delta t}^De_2 +\left(\frac{v_s\theta_s^D}{\lambda_s^D} + \eta_s \right) \langle u \rangle_t\right]. $$
Moreover, we have the backward iteration scheme 
$$ P_s^D =  P_{s + \delta t}^D - \kappa_s \delta t (\mathcal J P_{s + \delta t}^D + P_{s + \delta t}^D \mathcal J) + \kappa_s^2 (\delta t)^2 \mathcal J P_{s + \delta t}^D \mathcal J + \left[\Delta_s^D  - \frac{a_s \lambda_s^D}{v_s}g_s^D k_s^D  (k_s^D)^T\right]\delta t,$$
$$ b_s^D = b_{s + \delta t}^D -\left\{2\frac{a_s \lambda_s^D}{v_s} g_s^D f_s^D k_s^D  + \kappa_s \mathcal Jb_{s + \delta t}^D  -2\alpha_s (\mathcal I - \kappa_s\delta t \mathcal J)   P_{s + \delta t}^De_1 \right\}\delta t,$$
$$e_s^D = e_{s + \delta t}^D + [\sigma_{s,\delta t}^2+\alpha_s^2 (\delta t)^2]  e_1^T P_{s + \delta t}^De_1   +\left[\alpha_s (b_{s + \delta t}^D)^Te_1 - \frac{a_s \lambda_s^D}{v_s}   g_s^D  (f_s^D)^2\right] \delta t $$
where $\Delta_s^D = \left( \begin{matrix}  0 &0  \\ 0 & \beta_s^D \end{matrix} \right)$, and with the terminal conditions $ P_T^D = \left( \begin{matrix} 0 &0  \\ 0 & \tilde{\beta}_T^D \end{matrix} \right)$, $b_T^D = (0,0)^T$, $e_T^D=0$. 
\end{theorem}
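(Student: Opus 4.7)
The plan is to extend the proof of Theorem \ref{thmLQR} to accommodate two new ingredients: the Bernoulli random variable $\epsilon_s^D$ with $\mathbb P(\epsilon_s^D = 1) = \lambda_s^D \delta t$ that drives execution in the dark pool state transition, and the exogenous quantity $\langle u\rangle_t$, which appears linearly in $y_s$ inside the stage cost. The overall scheme is a backward induction on the dynamic programming principle
$$H^D(s, X^D) = \inf_{y \in \mathbb R} \mathbb E\!\left[\left. j_s^D(X_s^D, y)\,\delta t + H^D(s+\delta t, X_{s+\delta t}^D)\,\right|\,X_s^D = X^D\right],$$
initialized at $s = T$ where the given terminal condition $\tilde{\beta}_T^D (q^D)^2$ already matches the quadratic ansatz with $P_T^D$, $b_T^D = 0$, $e_T^D = 0$.

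Assuming at step $s+\delta t$ that $H^D(s+\delta t, X) = X^T P_{s+\delta t}^D X + (b_{s+\delta t}^D)^T X + e_{s+\delta t}^D$, I would substitute the state dynamics (\ref{eqState2Dark}) and the stage cost (\ref{eqStageCostDark}) into the right-hand side. Using independence of $\epsilon_s^D$ and $\xi_s$, together with $\mathbb E[\epsilon_s^D] = \mathbb E[(\epsilon_s^D)^2] = \lambda_s^D \delta t$ and $\mathbb E[\xi_s^2] = 1$, the expectation collapses into an unconstrained quadratic in $y$. The coefficient of $y^2$ gathers the $\theta_s^D\epsilon_s^D y^2$ term from the stage cost and the $(a_s/v_s)^2(\epsilon_s^D y)^2 e_2^T P_{s+\delta t}^D e_2$ term from the continuation, combining cleanly into $\frac{a_s \lambda_s^D}{v_s} g_s^D \delta t$ with $g_s^D$ as stated. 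The coefficient of $y$ splits into a piece linear in $X_s^D$ (yielding $k_s^D$) and a constant piece (yielding $f_s^D$, which picks up the combined impact parameter $v_s\theta_s^D/\lambda_s^D + \eta_s$ multiplying $\langle u\rangle_t$). Assuming $g_s^D > 0$, explicit minimization yields $y_s^* = (k_s^D)^T X_s^D + f_s^D$ in the stated form.

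To close the induction, I would substitute $y_s^*$ back and regroup the result as a quadratic form in $X_s^D$. The conjugation of $P_{s+\delta t}^D$ by the linear transition matrix $\mathcal I - \kappa_s \delta t \mathcal J$ produces the first three terms of the $P_s^D$ recursion, the inventory penalty contributes $\Delta_s^D \delta t$, and reinserting $y_s^*$ supplies the $-\frac{a_s \lambda_s^D}{v_s} g_s^D k_s^D (k_s^D)^T \delta t$ correction; similarly the drift $\alpha_s \delta t e_1$ feeds $2\alpha_s(\mathcal I -\kappa_s\delta t\mathcal J)P_{s+\delta t}^D e_1$ into $b_s^D$ and both $\alpha_s(b_{s+\delta t}^D)^T e_1$ and $\alpha_s^2(\delta t)^2 e_1^T P_{s+\delta t}^D e_1$ into $e_s^D$, while the Gaussian variance contributes only the $\sigma_{s,\delta t}^2 e_1^T P_{s+\delta t}^D e_1$ term inside $e_s^D$. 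The main obstacle is organizing the bookkeeping, since four independent sources of contribution --- Brownian noise, the jump $\epsilon_s^D$ (which rescales both cross and quadratic $y_s$ terms by $\lambda_s^D$), the drift $\alpha_s$, and the exogenous constant $\langle u\rangle_t$ --- must be carefully separated; one must also verify the existence of the minimizer, i.e.\ $g_s^D > 0$, which as in Theorem \ref{thmLQR} rests on a symmetry and positivity argument for $P_{s+\delta t}^D$ that holds under reasonable market impact parameters.
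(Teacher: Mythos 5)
Your proposal is correct and follows essentially the same route as the paper: a backward induction on the Bellman equation with a quadratic ansatz, where the jump variable is handled exactly as in the paper's auxiliary lemma via $\mathbb E[\epsilon_s^D] = \mathbb E[(\epsilon_s^D)^2] = \lambda_s^D\delta t$, the resulting quadratic in $y$ is minimized to produce $g_s^D$, $k_s^D$, $f_s^D$, and the feedback control is substituted back to yield the stated recursions. The paper likewise leaves the positivity of $g_s^D$ as an implicit well-posedness assumption, so your remark on that point is consistent with its treatment.
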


The implementation of the two-step strategy is documented in Algorithm
\ref{algoTwoStep}.

\begin{algorithm}[H] \label{algoTwoStep}
 \KwOut{The optimal Lit/Dark trading curve $(u_0^*, y_0^*)$, $(u_{\delta t}^*,y_{\delta t}^*), \cdots, (u_{T-\delta t}^*,y_{T-\delta t}^*)$.}
  Initialize clock time $\tau \leftarrow 0$ \;
  \For{$t = 0$ to $T-\delta t$}{
  Wait for clock time $\tau \geq t$\;
  Fetch $Q_t$, $\mathfrak{S}_t$, $S_t$\;
  Calculate $u_t^*$ by lit MPC \;
  $\langle u \rangle_t \leftarrow (1-\mathfrak a)Q_t/\left(\sum_{\tau=t}^T v_\tau \delta t\right)$\;
  $q_t^D \leftarrow \mathfrak a (q_t - a_t u_t^*)$ \;
  Calculate $y_t^*$ by dark LQR \;
  $y_t^* \leftarrow \min(\max(y_t^*, 0), q_t^D Q_0)$\;
  }
 \caption{Online calculation for the two-step optimal policy $(u_t^*, y_t^*)_{0 \leq t \leq T - \delta t}$.}
\end{algorithm}

We now briefly give the continuous time version of Theorem
\ref{thmLQRDark}. In the continuous limit the state process evolution
equation becomes
\[ dX_t^D = -\kappa_t \mathcal J X_t^D dt   +\alpha_t e_1dt - a_t \frac{y_{t}}{v_{t}}e_2 dN_t + \sigma_t e_1 dW_t\]
where \(N\) is an inhomogeneous Poisson process of intensity
\(\lambda_t^D\), and \(y_t\) and \(v_t\) are predictable. Let
\[ H^D(s,X^D) = \inf_{y \in \mathcal A_{s,T}^D} \mathbb{E} \left[\left.\int_{s}^{T} j_\tau^D(X_v^D,y_v) dv + J_T^D(X_T^D) \right| X_t^D=X^D \right]\]
where \(A_{s,T}^D\) is the set of predictable strategies. The associated
HJB equation is (Theorem 5.4, \citep{cartea2015algorithmic})
\begin{eqnarray*}\label{eqHJBDark}
\partial_t H^D(s,X^D) &+& \inf_{y \in \mathcal A_{s,T}^D} \left\{ \left[-\kappa_s \mathcal J X  \right]^T\partial_X H^D(s,X^D) + \frac{\sigma_s^2}{2} \partial_{p}^2 H^D(s,X^D) \right. \\
&+& \left. \lambda_s^D \left\{ H^D\left(s,X^D - a_t\frac{y_s}{v_s}e_2\right) - H^D(s,X^D)\right\} + \tilde{j}_s^D(X^D,y_s) \right\} = 0
\end{eqnarray*} with
\[ \tilde{j}_s^D(X^D,y_s) =  \frac{a_s}{v_s}\theta_s^D \lambda_s^D  y_s^2 + \frac{a_s}{v_s}p \lambda_s^D y_s   + \left( \theta_s^D + \frac{\eta_s}{v_s} \lambda_s^D \right)  a_s\langle u \rangle_t y_s + \beta_s^D \left(q^D\right)^2,\]
and with the terminal condition
\[ H^D(T,X^D) = \tilde{\beta}_T^D \left(q^D\right)^2.\] We obtain the
following theorem.

\begin{theorem} \label{thmLQRcontDark} (Unconstrained LQR with dark pool execution, continuous version)
Let $X^D = (p,q^D)^T$, $\mathcal I = \text{diag}(1,1)$, $\mathcal{J} = \text{diag}(1,0)$, and $e_1 = (1,0)^T$. The value function $H^D$ starting from $t \in [0,T)$, is quadratic in $X$, of the form 
$$H(s,X^D) = (X^D)^T P_s^D X^D + (b_s^D)^T X^D + e_s^D$$
for any $s \in [t, T)$. The optimal control $y_s^*$ is of the form 
\begin{align}  \label{eqUContDark}
y_s^* = k_s^D X + f_s^D 
\end{align}
with 

$$ g_s^D =      \theta_s^D   + \frac{ a_s}{v_s} e_2^T P_{t}^De_2  $$

$$(k_s^D)^T = -  (g_s^D)^{-1}    \left[ -e_2^T (P_{t}^D)^T   + \frac{1}{2} e_1^T \right]   $$
and
$$ f_s^D = -\frac{1}{2} (g_s^D)^{-1} \left[-e_2^Tb_{s }^D+\left(\frac{v_s\theta_s^D}{\lambda_s^D} + \eta_s \right) \langle u \rangle_t\right] $$
Moreover, we have the backward differential equations
$$ -\partial P_s^D =   - \kappa_s  (\mathcal J P_{s }^D + P_{s }^D \mathcal J) +  \Delta_s^D  - \frac{a_s \lambda_s^D}{v_s}g_s^D k_s^D  (k_s^D)^T ,$$
$$ -\partial b_s^D = - 2\frac{a_s \lambda_s^D}{v_s} g_s^D f_s^D k_s^D  - \kappa_s \mathcal Jb_{s }^D  +2\alpha_s   P_{s}^D e_1  ,$$
$$-\partial e_s^D =   \sigma_{s}^2 e_1^T P_{s }^De_1   + \alpha_s (b_{s}^D)^Te_1  - \frac{a_s \lambda_s^D}{v_s}   g_s^D  (f_s^D)^2  $$
where $\Delta_s^D = \left( \begin{matrix}  0 &0  \\ 0 & \beta_s^D \end{matrix} \right)$, and with the terminal conditions $ P_T^D = \left( \begin{matrix} 0 &0  \\ 0 & \tilde{\beta}_T^D \end{matrix} \right)$, $b_T^D = (0,0)^T$, $e_T^D=0$. 
\end{theorem}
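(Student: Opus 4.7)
The approach mirrors the one sketched for Theorem \ref{thmLQRContinuous}: we guess a quadratic ansatz for the value function and verify it solves the HJB equation via a standard verification theorem for jump-diffusion controls (Theorem 5.4 in \cite{cartea2015algorithmic}). Write
\[ H^D(s,X^D) = (X^D)^T P_s^D X^D + (b_s^D)^T X^D + e_s^D, \]
with $P_s^D$ symmetric and $P_T^D, b_T^D, e_T^D$ given by the stated terminal conditions. Compute $\partial_X H^D = 2P_s^D X^D + b_s^D$ and $\partial_p^2 H^D = 2 e_1^T P_s^D e_1$. For the jump term, with the notation $\Delta_y = a_s (y/v_s) e_2$, a direct expansion yields
\[ H^D(s,X^D - \Delta_y) - H^D(s,X^D) = -2 \Delta_y^T P_s^D X^D + \Delta_y^T P_s^D \Delta_y - \Delta_y^T b_s^D, \]
which, after multiplication by $\lambda_s^D$ and addition of $\tilde{j}_s^D$, produces a one-dimensional quadratic in $y$ of the form $A_s y^2 + B_s(X^D) y + C_s(X^D)$ with $A_s = (a_s \lambda_s^D/v_s)\, g_s^D$.

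The first-order condition $y^* = -B_s/(2A_s)$ and patient bookkeeping (in particular, using $e_1^T X^D = p$ and the symmetry of $P_s^D$) then lead to the stated expressions for $g_s^D$, $(k_s^D)^T$ and $f_s^D$. The simplification
\[ \frac{v_s\bigl(\theta_s^D + (\eta_s/v_s)\lambda_s^D\bigr)}{\lambda_s^D} = \frac{v_s \theta_s^D}{\lambda_s^D} + \eta_s \]
is what produces the particular grouping appearing in $f_s^D$. Substituting $y^*$ back into the HJB, the minimized quadratic contributes $-A_s (y^*)^2 = -(a_s\lambda_s^D/v_s)\, g_s^D (k_s^D X^D + f_s^D)^2$, which splits into a quadratic form $-(a_s\lambda_s^D/v_s)\, g_s^D\, k_s^D (k_s^D)^T$ in $X^D$, a linear term with coefficient $-2(a_s\lambda_s^D/v_s)\, g_s^D f_s^D k_s^D$, and a constant $-(a_s\lambda_s^D/v_s)\, g_s^D (f_s^D)^2$.

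Matching coefficients of $(X^D)^T (\cdot) X^D$, $(\cdot)^T X^D$ and the constant in the resulting identity gives exactly the three backward ODEs of the theorem. For the quadratic part, the drift contributes $-\kappa_s (\mathcal{J} P_s^D + P_s^D \mathcal{J})$ (after symmetrising $-2\kappa_s X^T \mathcal{J} P_s^D X$), and the running cost contributes $\Delta_s^D$; for the linear part, the drift contributes $-\kappa_s \mathcal{J} b_s^D + 2\alpha_s P_s^D e_1$; for the constant part, the diffusion contributes $\sigma_s^2 e_1^T P_s^D e_1$ and the drift contributes $\alpha_s (b_s^D)^T e_1$. This produces precisely the three equations stated.

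\textbf{Main obstacle.} The only non-mechanical step is the application of the verification theorem, which requires: (i) a unique $C^1$ solution of the coupled backward Riccati system on $[t,T]$, which in turn requires $g_s^D$ to stay bounded away from zero (guaranteed when $\theta_s^D > 0$ and $P_s^D$ remains positive semidefinite along the trajectory); (ii) sufficient integrability of the candidate control $y_s^* = k_s^D X_s^D + f_s^D$ under the state dynamics driven by $W$ and the compensated Poisson $N$; (iii) the standard growth condition on $H^D$ ensuring that the expected value of the martingale part of $H^D(s, X_s^D)$ vanishes in the limit. Points (ii) and (iii) follow from the linearity of the feedback in $X^D$ and standard moment estimates for affine SDE-jump systems, while (i) is the substantive assumption one carries over implicitly from Theorem \ref{thmLQRContinuous} and which is straightforwardly checked numerically in the parameter regimes of interest.
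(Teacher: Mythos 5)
Your proposal is correct and follows essentially the same route as the paper, which proves this theorem by the one-line remark that it "follows the same line of reasoning as Theorem \ref{thmLQRContinuous}" (quadratic ansatz, pointwise minimization in the HJB, coefficient matching, verification theorem). Your explicit expansion of the jump term and the identification $A_s = (a_s\lambda_s^D/v_s)\,g_s^D$ correctly supply the only ingredient that is new relative to the lit continuous case, and your closing discussion of the verification hypotheses is more careful than what the paper records.
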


Figure 5 shows an example of a lit MPC/dark LQR execution, with the
signal \(\kappa_t > 0\). We also have \(\mathfrak a = 50\%\),
\(\lambda_t^D \delta t = 1/30\), market volumes are flat, there is no
urgency during the algorithm, the hard completion constraint \(q_T =0\)
for the lit market, and a small completion constraint
\(\tilde{\beta}_T^D \approx 0\) for the dark pool.

\begin{figure}
\centering
\includegraphics{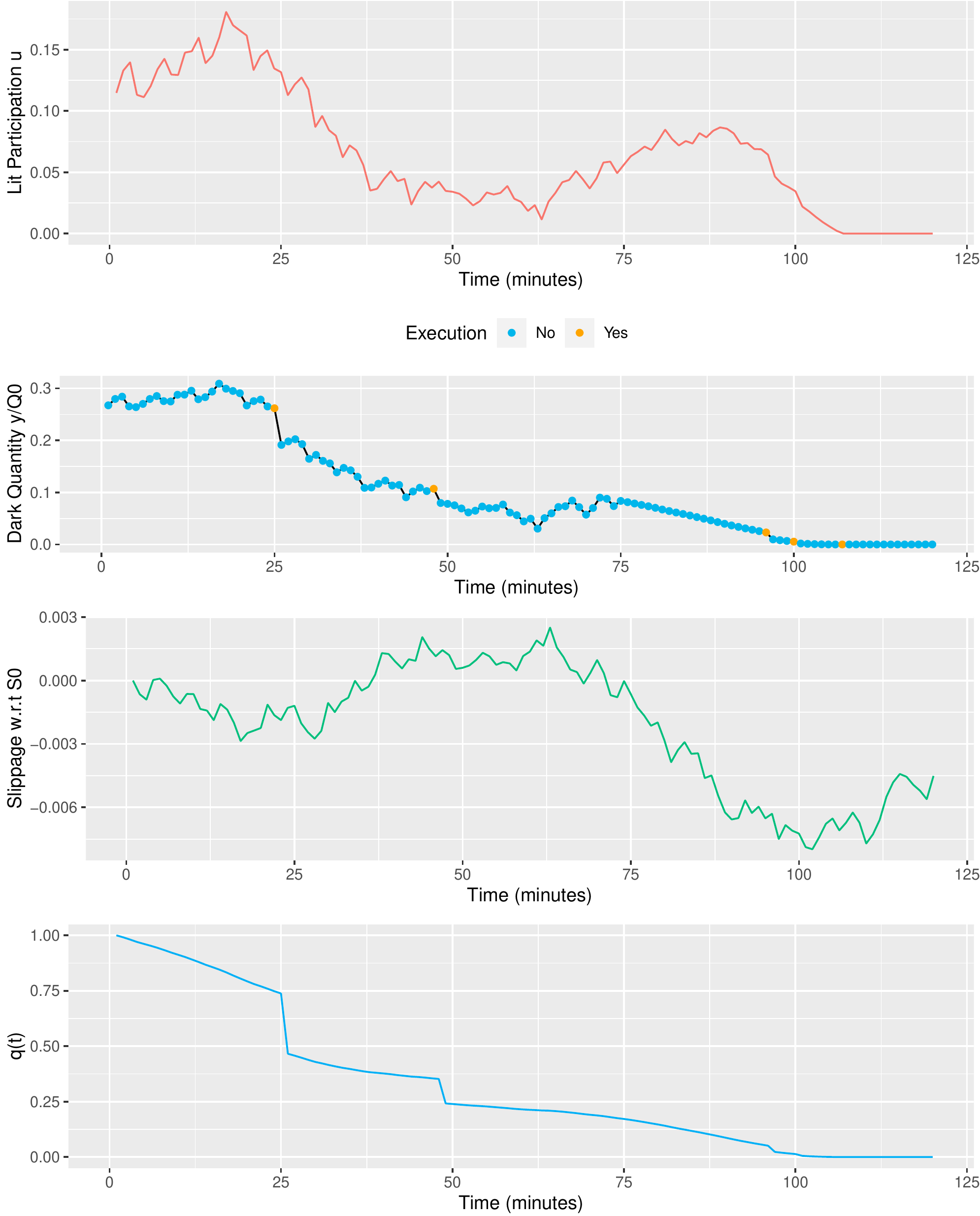}
\caption{Optimal execution with a lit market and a dark pool}
\end{figure}

\section{Conclusion} \label{SectionConclusion}

We have shown that by adding a mean-reversion signal in the price
slippage dynamics, we obtain an optimal execution problem following an
LQR specification. The exact shape of the optimal policy function can be
iteratively derived before the order starts, and is aggressive in the
money. If round-trip trades must be avoided, the MPC approach can impose
positivity constraints on the participation curve while remaining close
to the LQR optimal strategy. In the continuous trading limit with no
permanent impact, we have seen that one can derive a closed formula for
the optimal policy which can be further simplified under no urgency,
full completion condition, and when the mean-reversion signal is weak.
Finally, we have seen that it is possible (although heuristic) to
construct a two-step strategy in the presence of an additional dark
pool, where a lit MPC policy and a dark LQR strategy work together.

The mean-reversion signal can be seen from different perspectives. It
can be an actual, local mean-reversion signal that one has estimated on
a particular asset. However, it is often difficult to find reliable
patterns in price series. Still, such signals may exist, in particular
if the execution task concerns not one but a pair of stocks or more. As
exposed in Section \ref{SectionKappa}, another approach consists in
using the mean-reverting signal as a tuning parameter in the optimizer
to induce a certain level of aggressiveness in the money. This can be
motivated by the broker's inside information or by clients who have
their own agenda.

We leave for future research the case of multi-asset optimal execution
and rolling MPC as exposed at the end of Section \ref{SectionMPC}.
Although they are of great importance for more advanced practical
applications, we believe that they are complex problems and they both
deserve more than a mere section in the present paper.

\section{Appendix}

\subsection{Proofs of Theorem \ref{thmLQR} and Theorem \ref{thmLQR2}}

We prove only Theorem \ref{thmLQR2}, since Theorem \ref{thmLQR}
corresponds to the particular case \(\alpha_t = 0\). Let
\(\mathcal{I} = \text{diag}(1,1)\), and
\(\mathcal{J} = \text{diag}(1,0)\). Recall the Bellman equation
\[ H(t, X) = \inf_{u \in \mathbb R} \left\{j_t(X, u) \delta t + \underbrace{\mathbb E \left[ H\left(t+ \delta t,  X - \kappa_t \delta t \mathcal{J} X + a_t w_t \delta t u+  \left( \begin{matrix} \alpha_t \delta t + \sigma_{t, \delta t} z_t \\ 0 \end{matrix} \right) \right)\right]}_{h_t(X,u)} \right \}.\]
We prove our claim on \(H(t,X)\) and \(u_t^*\) by descending induction
on \(t \in \{ 0, \delta t, ..., T\}\). At time \(T\), there is no
control, and \(H(T,X) = \tilde{\beta}_T q_T^2\) which is the claimed
quadratic form in \(T\), with \(P_T\), \(b_T^T\) and \(e_T\) as stated
in Theorem \ref{thmLQR}. Assume now that the claim is true on
\(t+\delta t, ..., T\). Then we have
\[ j_t(X,u) = a_t\eta_t u^2 + a_t(p+s_t)u + \beta_t q^2 \] and direct
calculations using
\(H(t+\delta t, X) = X^T P_{t+\delta t} X + b_{t+\delta t}^T X + e_{t+\delta t}\)
yield \begin{eqnarray*} 
h_t(X,u) &=& a_t^2 \delta t^2 w_t^T P_{t+\delta t}w_t  u^2 + \left \{2 a_t    \left (X^T  \left(\mathcal{I} - \kappa_t \delta t \mathcal J\right) + \alpha_t \delta t e_1^T\right) P_{t+\delta t} w_t + a_tb_{t+\delta t}^Tw_t \right\} \delta tu \\ &+&  X^T\left(\mathcal{I} - \kappa_t \delta t \mathcal J\right) P_{t+\delta t} \left(\mathcal{I} - \kappa_t \delta t \mathcal J\right) X + \left[b_{t+\delta t}^T + 2 \alpha_t \delta t e_1^T P_{t+\delta t} \right]\left(\mathcal{I} - \kappa_t \delta t \mathcal J\right)X \\&+& e_{t+\delta t} + \alpha_t \delta t b_{t+\delta t}^Te_1 +  \left[\sigma_{t,\delta t}^2 + \alpha_t^2 (\delta t)^2\right] e_1^T P_{t + \delta t}e_1.
\end{eqnarray*} Hence \(j_t(X,u)\delta t+h_t(X,u)\) is quadratic in
\(u\) and we readily check that the optimal \(u\) which makes it minimal
is given by \(k_t^T X + f_t\). We then directly get that plugging this
is in \(j_t(X,u)\delta t+h_t(X,u)\) yields the quadratic expression in
\(X\) \[H(t,X) = X^T P_t X + b_t^T X +e_t\] with
\[ P_t =  P_{t+\delta t} - \kappa \delta t (\mathcal J P_{t+\delta t} + P_{t+\delta t} \mathcal J) + \kappa^2 (\delta t)^2 \mathcal J P_{t+\delta t} \mathcal J + \left[\Delta_t  - a_tg_tk_t k_t^T\right]\delta t,\]
\[ b_t = b_{t+\delta t} -\left\{2a_tg_tf_tk_t + \kappa_t \mathcal Jb_{t+\delta t}  - 2\alpha_t  (\mathcal I - \kappa_t \delta t\mathcal J)P_{t+\delta t} e_1 \right\}\delta t,\]
\[e_t = e_{t+\delta t} +  [\sigma_{t,\delta t}^2+ \alpha_t^2(\delta t)^2] e_1^T P_{t+\delta t}e_1 + \left[  \alpha_t  b_{t+\delta t}^Te_1   - a_t g_t f_t^2 \right]  \delta t \]
as claimed in the theorem.

\subsection{Proofs of Theorem \ref{thmMpc} and Proposition \ref{propMPC}}

\begin{proof}[Proof of theorem \ref{thmMpc}]
We derive the expression of the quadratic form $C_t(u_t, u_{t+\delta t}, ..., u_{T-\delta t})$. To do so, it is convenient to first extend the state process $X_t = (p_t,q_t)^T$ to $\tilde{X}_t = (p_t,q_t,u_t)^T$. At time $0 \leq t \leq T$, the stage cost  $j_t(X_t,u_t)\delta t$ can be rewritten as a quadratic form in $\tilde{X}_t$. We have 
$$ j_t(\tilde{X}_t) = \tilde{X}_t^T M_t \tilde{X}_t + N_t \tilde{X}_t$$
where we recall that for $t< T$ we have
$$M_t = \left(\begin{matrix} 0 &0&a_t/2\\0&  \beta_t& 0 \\a_t/2&0&a_t\eta_t\end{matrix}\right), \text{ and } N_t =   \left(\begin{matrix}0\\0\\ a_ts_t\end{matrix}\right)$$
and at time $T$ 
$$M_T = \left(\begin{matrix} 0 &0&0\\0&  \tilde{\beta}_T& 0 \\0&0&0\end{matrix}\right), \text{ and } N_t =   \left(\begin{matrix}0\\0\\0\end{matrix}\right).$$
Now, the global cost starting from time $t$ is 
\begin{eqnarray*}
C_t(u_t, u_{t+\delta t}, ..., u_{T-\delta t}) = \sum_{\tau = t}^{T-\delta t} j_\tau(\tilde{X}_\tau)\delta t +J_T(\tilde{X}_T).
\end{eqnarray*}
Note the linear representation, under CE,
\begin{eqnarray*}  \tilde{X}_\tau &=& \underbrace{\left(\begin{matrix} \pi_{(t+\delta t):(\tau-\delta t)}^\kappa v_t\mu_{t}\delta t & \pi_{(t+2\delta t):(\tau-\delta t)}^\kappa v_{t+\delta t}\mu_{t+\delta t}\delta t &...&  v_{\tau-\delta t}\mu_{\tau-\delta t}\delta t & 0 &0& ...\\  -a_{t}\delta t & -a_{t+\delta t}\delta t& ... &   -a_{\tau-\delta t}\delta t&0 & 0&... \\0&0&...&0&1&0&...\end{matrix}\right)}_{L_{\tau}} u_{t:(T-\delta t)} \\&+& \underbrace{\left(\begin{matrix}   \pi_{t:(\tau-\delta t)}^\kappa p_{t} + \sum_{\theta = t}^{\tau-\delta t} \pi_{\theta:(\tau-\delta t)}^\kappa \alpha_\theta \delta t\\q_{t} \\0\end{matrix} \right)}_{Y_{\tau}},
\end{eqnarray*}
so that

\begin{eqnarray*} 
C_t(u_t, u_{t+\delta t}, ..., u_{T-\delta t}) &=& u_{t:(T-\delta t)}^T \left(\left\{\sum_{\tau = t}^{T-\delta t} L_\tau^T M_\tau L_\tau \right\}\delta t + L_T^T M_T L_T\right) u_{t:(T-\delta t)} \\ &+& \left(\left\{\sum_{\tau = t}^{T-\delta t} 2 Y_{\tau} M_\tau L_\tau + N_\tau L_\tau \right\} \delta t + 2 Y_{T} M_T L_T + N_T L_T \right) u_{t:(T-\delta t)}  \\&+& \textbf{c}_t,
\end{eqnarray*}
where $\textbf{c}_t$ is a constant term,  as claimed in the theorem.
\end{proof}

\begin{proof}[Proof of Proposition \ref{propMPC}]
Remark that the coefficients $k_t^T$ and $f_t$ do not depend on $\sigma_{t, \delta t}^2$ in Theorem \ref{thmLQR2}. Hence they would be unchanged if, all things being equal, $\sigma_{t, \delta t}^2$ were set to $0$. Therefore, the LQR solver yields the optimal policy under the certainty equivalent problem. On the other hand, by definition, the MPC solver in the absence of constraints precisely gives the optimal participation rate under CE.   

\end{proof}
\subsection{Proofs of Theorem \ref{thmLQRContinuous},  Theorem \ref{thmFormula} and Corollary \ref{CorFormula}}

\begin{proof}[Proof of Theorem \ref{thmLQRContinuous}]
We look again for a quadratic value function of the form $H(t,X) = X^T P_t X + b_t^T X +e_t$ and will use the verification theorem to back up our claim. Similarly to the discrete case, plugging $H(t,X)$ in (\ref{eqHJB}) and minimizing the quadratic expression in $u_t$ easily yields the expression of the optimal strategy in feedback form 
$$ u_t^* = k_t^T X_t + f_t.$$
Next, evaluating  (\ref{eqHJB}) at point $u_t^*$ and grouping together quadratic terms in $X$, then linear terms in $X$, and finally the constants respectively give the claimed backward differential equations for $P_t$, $b_t$, and $e_t$. By assumption $P_t$ is well-defined, and so are $b_t$ and $e_t$ which follow first order linear differential equations. Finally, by construction $H$ satisfies Theorem 5.2 in \cite{cartea2015algorithmic}.  
\end{proof}

\begin{proof}[Proof of Theorem \ref{thmFormula} and Corollary \ref{CorFormula}]
Let us write $P_t = (p_{ij,t})_{i,j \in \{1,2\}}$ with $p_{12,t} = p_{21,t}$. Under the assumption of the theorem, note that we have 
$$ k_t = \frac{1}{\eta}\left( p_{12,t} - \frac{1}{2}, p_{22,t} \right)^T$$
where $p_{12,.}$ and $p_{22,.}$ satisfy the following Riccati equations 
$$ \partial_t p_{12,t} = \kappa p_{12,t}+\frac{a}{\eta} p_{22,t} \left(p_{12,t} - \frac{1}{2}\right) $$
and 
$$ \partial_t p_{22,t} = -\beta + \frac{a}{\eta} p_{22,t}^2$$
with terminal conditions $p_{22,T} = \tilde{\beta}_T$ and $p_{12,T} = 0$. Solving for $p_{22,.}$ first note that we have 
$$ -\frac{\eta}{a}\frac{\partial_{t} p_{22,t}}{ \frac{\eta\beta}{a} - p_{22,t}^2} =1$$
so that integrating on both sides on $[t,T]$ and using that $\frac{1}{2\sqrt{\frac{\eta \beta}{a}}}\ln\left( \frac{\sqrt{\frac{\eta \beta}{a}} + p_{22,.}}{\sqrt{\frac{\eta \beta}{a}} - p_{22,.}}\right)$ is a primitive for $\frac{\partial_{t} p_{22,.}}{ \frac{\eta\beta}{a} - p_{22,.}^2}$, we obtain
$$ - \frac{\eta}{a} \left[ \ln\left( \frac{\sqrt{\frac{\eta \beta}{a}} + p_{22,T}}{\sqrt{\frac{\eta \beta}{a}} - p_{22,T}}\right) - \ln\left( \frac{\sqrt{\frac{\eta \beta}{a}} + p_{22,t}}{\sqrt{\frac{\eta \beta}{a}} - p_{22,t}}\right)\right] = 2(T-t)\sqrt{\frac{\eta \beta}{a}}.$$
Replacing $p_{22,T}$ by $\tilde{\beta}_T$ and isolating $p_{22,t}$ in the above equation readily yields the claimed representation. Next, we introduce $y(t) = p_{12,t} - \frac{1}{2}$ and note that  we have 
$$ \partial_t y(t) = \left(\kappa +\frac{a}{\eta} p_{22,t} \right) y(t) + \frac{\kappa}{2}, \text{ and }y(T) = -\frac{1}{2}.$$
Let $y_0$ be the solution to the homogenous equation (without the last term $\kappa/2$) satisfying $y_0(T) = 1$. Then we have 
$$\frac{ \partial_t y_0(t)}{y_0(t)} = \kappa +\frac{a}{\eta} p_{22,t}$$
so that integrating on both sides on $[t,T]$ yields again  
$$ \ln \frac{1}{y_0(t)} = \kappa(T-t) - \sqrt{\frac{a\beta}{\eta}} \int_t^T \frac{\zeta e^{2\sqrt{\frac{a\beta}{\eta}}(T-s)} -1}{\zeta e^{2\sqrt{\frac{a\beta}{\eta}}(T-s)} +1}ds.$$
Using that $\sqrt{\frac{a\beta}{\eta}}\int_t^T \frac{\zeta e^{2\sqrt{\frac{a\beta}{\eta}}(T-s)} }{\zeta e^{2\sqrt{\frac{a\beta}{\eta}}(T-s)} +1}ds = -\frac{1}{2}\ln \left( \frac{1+\zeta e^{2\sqrt{\frac{a\beta}{\eta}}(T-t)}}{1+\zeta}\right)$ and  $-\sqrt{\frac{a\beta}{\eta}}\int_t^T \frac{1 }{\zeta e^{2\sqrt{\frac{a\beta}{\eta}}(T-s)} +1}ds = \frac{1}{2}\ln \left( \frac{\zeta+ e^{-2\sqrt{\frac{a\beta}{\eta}}(T-t)}}{1+\zeta}\right)$, and isolating $y_0(t)$ yields 
$$ y_0(t) =  \frac{1+\zeta}{e^{\left(\kappa - \sqrt{\frac{a\beta}{\eta}} \right)(T-t)} +\zeta e^{\left(\kappa + \sqrt{\frac{a\beta}{\eta}} \right)(T-t)}}.$$
Now, by the variation of constants method, we easily obtain that the solution to the original equation is given by 
$$ y(t) = \left(-\frac{1}{2} - \frac{\kappa}{2} \int_t^T y_0(s)^{-1}ds \right) y_0(t)$$
which gives the claimed value for $p_{12,t} = y(t) + \frac{1}{2}.$ We finally need to show that $f_t$ = 0. Let $b_t = (b_{1,t}, b_{2,t})^T$. Then we have $f_t = \frac{b_{2,t}}{2\eta}$. Plugging this value into the differential equation for $b_{2,t}$ gives
$$ \partial_t b_{2,t} = \frac{2a}{\eta} b_{2,t} p_{22,t}$$
with the terminal condition $b_{2,T} = 0$. It is immediate to see that $b_{2,t}=0$ is the unique solution to the above Cauchy problem, so that $f_t =0$ as well and we are done. The corollary is simply obtained by first taking the pointwise limit $\beta \to 0$ in the above expressions and then by taking $\tilde{\beta}_T \to +\infty$.
\end{proof}

\subsection{Proofs of Theorem \ref{thmLQRDark} and Theorem \ref{thmLQRcontDark}}

The following lemma will be convenient for calculations.

\newtheorem{lemma}{Lemma}
\begin{lemma}
Let $P \in \mathbb{R}^{2 \times 2}$ be a deterministic matrix. Then for $0 \leq t \leq T$
$$ \mathbb{E}\left[\frac{a_t^2}{v_t^2} (\epsilon_t^D)^2 e_2^TPe_2\right] = \frac{a_t^2}{v_t^2} \lambda_t^D e_2^TPe_2 \delta t.$$
 
\end{lemma}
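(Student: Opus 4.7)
The plan is straightforward once one observes that $\epsilon_t^D$ is a Bernoulli random variable: by the model description in Section 7.1, it takes the value $1$ (meaning that the full pending quantity gets executed over $[t, t+\delta t)$) with probability $\lambda_t^D \delta t$, and the value $0$ otherwise. Everything else appearing inside the expectation is deterministic.

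First I would pull the deterministic scalar $\frac{a_t^2}{v_t^2} e_2^T P e_2$ out of the expectation by linearity. What remains is $\mathbb{E}[(\epsilon_t^D)^2]$. Since $\epsilon_t^D$ takes only the values $0$ and $1$, one has the algebraic identity $(\epsilon_t^D)^2 = \epsilon_t^D$, and therefore $\mathbb{E}[(\epsilon_t^D)^2] = \mathbb{E}[\epsilon_t^D] = \lambda_t^D \delta t$. Reassembling yields the claimed equality exactly.

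There is no genuine obstacle here; the lemma is essentially a packaging of the second-moment formula for a Bernoulli variable. Its role is bookkeeping for the induction used in the proof of Theorem \ref{thmLQRDark}: the dark state transition (\ref{eqState2Dark}) contains the jump term $-\frac{a_s}{v_s}\epsilon_s^D e_2 y_s$, and when one plugs the quadratic Ansatz $H^D(s+\delta t, X^D) = (X^D)^T P_{s+\delta t}^D X^D + \cdots$ into the Bellman recursion, squaring the jump produces a factor of the form $(\epsilon_s^D)^2 \, e_2^T P_{s+\delta t}^D e_2$. Replacing this factor by its expectation via the lemma collapses the recursion to the same algebraic structure as in the proof of Theorem \ref{thmLQR2}, which is precisely why the backward scheme in Theorem \ref{thmLQRDark} differs from that of Theorem \ref{thmLQR2} only through the appearance of $\lambda_s^D \delta t$ at the relevant places.
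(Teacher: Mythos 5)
Your proof is correct and is exactly the argument the paper uses: the paper's one-line proof simply invokes the fact that $\epsilon_t^D$ is Bernoulli with parameter $\lambda_t^D\delta t$, and your observation that $(\epsilon_t^D)^2=\epsilon_t^D$ for a $\{0,1\}$-valued variable is the implicit step being relied upon. Nothing further is needed.
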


\begin{proof}
This is a direct consequence of the fact that $\epsilon_t^D$ follows a Bernoulli distribution with parameter $\lambda_t^D\delta t$.
\end{proof}

\begin{proof}[Proof of Theorem \ref{thmLQRDark}]
Let $0 \leq t \leq T$ and $s \geq t$ as in the theorem. Recall that 
$$ j_s^D(X^D,y) =   (\delta t)^{-1}\frac{a_s}{v_s}\theta_s^D \epsilon_s^D  y^2 + (\delta t)^{-1}\frac{a_s}{v_s}\epsilon_s^Dp y   +  \left(\theta_s^D+ (\delta t)^{-1}\frac{\eta_s}{v_s} \epsilon_s^D\right)a_s \langle u \rangle_t y_s + \beta_s^D (q^D)^2. $$
Let 
$$ h_s^D(X^D,y) = \mathbb E \left[ H^D\left(s+ \delta t,  X^D - \kappa_s \delta t \mathcal{J} X^D - \frac{a_s}{v_s} \epsilon_s^D e_2 y+  \left( \begin{matrix} \alpha_s \delta t + \sigma_{s, \delta t} z_s \\ 0 \end{matrix} \right) \right)\right].$$
Similar reasoning to that of the proof of Theorem \ref{thmLQR} yields
\begin{eqnarray*}
h_s^D(X,y) &=& (X^D)^T(\mathcal I - \kappa_s \delta t\mathcal J)P_{s + \delta t}^D   (\mathcal I - \kappa_s \delta t \mathcal J) X^D  \\
&-& 2 \frac{a_s\lambda_s^D}{v_s}(X^D)^T(\mathcal I - \kappa_s \delta t \mathcal J) P_{s + \delta t}^D e_2 y \delta t  +2\alpha_s \delta t e_1^T P_{s + \delta t}^D \left[ (\mathcal I - \kappa_s \delta t \mathcal J) X^D - \frac{a_s \lambda_s^D \delta t}{v_s} e_2 y \right]\\
&+& a_s^2\delta t  \frac{\lambda_s^D}{v_s^2} e_2^T P_{s + \delta t}^D e_2 y^2 + (b_{s + \delta t}^D)^T  [(\mathcal I - \kappa_s\delta t \mathcal J) X^D +\alpha_s \delta t e_1]  - a_s\frac{\lambda_s^D}{v_s} (b_{s + \delta t}^D)^T e_2 y \delta t\\ 
&+& e_{s + \delta t} + \sigma_{s,\delta t}^2 e_1^T P_{s + \delta t}^D e_1,
\end{eqnarray*}
hence
\begin{eqnarray*}
\mathbb E [j_s^D(X,y)\delta t] + h_s^D(X,y) &=&  \left[a_s \frac{\theta_s^D\lambda_s^D}{v_s} +  \frac{a_s^2\lambda_s^D}{v_s^2}e_2^T P_{s + \delta t}^De_2  \right]\delta t y^2 \\
&+&  \frac{a_s}{v_s} \lambda_s^D \bigg[ (X^D)^Te_1    -2 (X^D)^T(\mathcal I - \kappa_s\delta t \mathcal J) P_{s + \delta t}^D e_2  \\
&-&   (b_{s + \delta t}^D)^T e_2   - 2\alpha_s \delta t e_1^TP_{s + \delta t}^De_2+ \left(\frac{v_s\theta_s^D}{\lambda_s^D} + \eta_s \right) \langle u \rangle_t\bigg]y \delta t \\
&+& (X^D)^T[(\mathcal I - \kappa_s \delta t\mathcal J)  P_{s + \delta t}^D (\mathcal I - \kappa_s \delta t\mathcal J) + \Delta_s^D]X^D \\
&+& (b_{s + \delta t}^D)^T[(\mathcal I - \kappa_s\delta t \mathcal J)X^D+\alpha_s \delta t e_1]  \\&+&  2\alpha_s \delta t e_1^T P_{s + \delta t}^D  (\mathcal I - \kappa_s \delta t \mathcal J) X^D+ e_{s + \delta t}^D+[\sigma_{s,\delta t}^2+\alpha_s^2 (\delta t)^2] e_1^T P_{s + \delta t}^De_1
\end{eqnarray*}
where $\Delta_s^D = \text{diag}[0, \beta_s^D]$. Letting 
$$ g_s^D = \theta_s^D   + \frac{ a_s}{v_s} e_2^T P_{s + \delta t}^De_2  $$

$$(k_s^D)^T = -  (g_s^D)^{-1}    \left[ -e_2^T (P_{s + \delta t}^D)^T (\mathcal I -\kappa_s \delta t \mathcal J) + \frac{1}{2} e_1^T \right]   $$
and
$$ f_s^D = -\frac{1}{2} (g_s^D)^{-1} \left[-e_2^Tb_{s + \delta t}^D- 2\alpha_s \delta t e_1^TP_{s + \delta t}^De_2 +\left(\frac{v_s\theta_s^D}{\lambda_s^D} + \eta_s\right) \langle u\rangle_t\right] $$
we obtain

\begin{eqnarray*}
\mathbb E [j_s^D(X^D,y)\delta t] + h_s(X^D,y) &=& \frac{a_s\lambda_s^D}{v_s} \delta t g_s^D y^2 -2 \frac{a_s\lambda_s^D}{v_s}\delta t [ X^T k_s^D +  f_s^D] g_s^D y \\
&+& (X^D)^T[(\mathcal I - \kappa_s \delta t \mathcal J)  P_{s + \delta t}^D (\mathcal I - \kappa_s\delta t \mathcal J) + \Delta_s^D]X^D \\
&+& (b_{s + \delta t}^D)^T(\mathcal I - \kappa_s\delta t \mathcal J)X^D + e_{s + \delta t}^D+[\sigma_{s,\delta t}^2+\alpha_s^2 (\delta t)^2]  e_1^T P_{s + \delta t}^De_1 
\end{eqnarray*}

hence the expression of $y_s^*$ in feedback form 
$$ y_s^* = (k_s^D)^T X^D + f_s^D.$$

Plugging back $y_s^*$ into the expression above yields
 
$$ P_s^D =  P_{s + \delta t}^D - \kappa_s \delta t (\mathcal J P_{s + \delta t}^D + P_{s + \delta t}^D \mathcal J) + \kappa_s^2 (\delta t)^2 \mathcal J P_{s + \delta t}^D \mathcal J + \left[\Delta_s  - \frac{a_s \lambda_s^D}{v_s}g_s^D k_s^D  (k_s^D)^T\right]\delta t,$$
$$ b_s^D = b_{s + \delta t}^D -\left\{2\frac{a_s \lambda_s^D}{v_s} g_s^D f_s^D k_s^D  + \kappa_s \mathcal Jb_{s + \delta t}  -2\alpha_s (\mathcal I - \kappa_s\delta t \mathcal J)  P_{s + \delta t}^De_1 \right\}\delta t,$$
$$e_s^D = e_{s + \delta t}^D + [\sigma_{s,\delta s}^2+\alpha_s^2 (\delta t)^2]  e_1^T P_{s + \delta t}^De_1   +\left[\alpha_s (b_{s + \delta t}^D)^Te_1 - \frac{a_s \lambda_s^D}{v_s}   g_s^D  (f_s^D)^2\right] \delta t $$
as claimed in the theorem.

\end{proof}

\begin{proof}[Proof of Theorem \ref{thmLQRcontDark}]
This follows the same line of reasoning as for the proof of Theorem \ref{thmLQRContinuous}.
\end{proof}

\bibliography{biblio}

\end{document}